\DeclareRobustCommand 
\renewcommand{\ldots}{%
  \Compactldots
}
\newtheorem{thm}{Theorem}[section]
\newtheorem{theorem}{Theorem}[section]
\newtheorem{corollary}[thm]{Corollary}
\newtheorem{proposition}[thm]{Proposition}
\newtheorem{lemma}[thm]{Lemma}
\newtheorem{notation}[thm]{Notation}
\newtheorem{definition}[thm]{Definition}
\newtheorem{example}[thm]{Example}
\newtheorem{remark}[thm]{Remark}
\newcommand{\vd}{\ \,\textcolor{black}{\vdash}\ \,}
\newcommand{\ol}[1]{\overline{#1}}
\newcommand{\aand}{\ensuremath{\wedge}\xspace}
\newcommand{\aor}{\ensuremath{\vee}\xspace}
\newcommand{\ararr}{\ensuremath{\rightarrow}\xspace}
\newcommand{\fns}{\footnotesize}
\def\mc{\multicolumn}
\newcommand{\aatop}{\ensuremath{\top}\xspace}
\newcommand{\abot}{\ensuremath{\bot}\xspace}
\newcommand{\wdia}{\ensuremath{\Diamond}\xspace}
\newcommand{\GTK}{G3K }
\newcommand{\Algname}{MASSA }
\newcommand{\nx}{\textcolor{black}{x}}
\newcommand{\NR}{\textcolor{black}{R}}
\newcommand{\ny}{\textcolor{black}{y}}
\newcommand{\wbox}{\ensuremath{\Box}\xspace}
\newcommand{\bvarphi}{\textcolor{blue}{\varphi^b}}
\newcommand{\bA}{\textcolor{blue}{A^b}}
\newcommand{\bAu}{\textcolor{blue}{A^b_1}}
\newcommand{\bAd}{\textcolor{blue}{A^b_2}}
\newcommand{\bB}{\textcolor{blue}{B^b}}
\newcommand{\bp}{\textcolor{blue}{p^b}}
\newcommand{\rvarphi}{\textcolor{red}{\varphi^r}}
\newcommand{\rA}{\textcolor{red}{A^r}}
\newcommand{\rAt}{\textcolor{red}{A^r_3}}
\newcommand{\rAq}{\textcolor{red}{A^r_4}}
\newcommand{\rB}{\textcolor{red}{B^r}}
\newcommand{\rp}{\textcolor{red}{p^r}}
\newcommand{\bgamma}{\textcolor{blue}{\gamma}}
\newcommand{\rdelta}{\textcolor{red}{\delta}}
\tikzset{
	treenode/.style = {align=center, inner sep=0pt, text centered},
	Ske/.style = {treenode, ellipse, double, draw=black,
		minimum width=6pt, thick},
	PIA/.style = {treenode, ellipse, black, draw=black,
		minimum width=6pt},
	Crit/.style = {treenode, rectangle, draw=black,
		minimum width=0.5em, minimum height=0.5em}
}
\title{Algorithmic correspondence and analytic rules}
\author{Andrea De Domenico}
\author{Giuseppe Greco}
\author{Alessandra Palmigiano}
\affil{Vrije Universiteit, Amsterdam}
\date{}
\begin{document}

\maketitle

\begin{abstract}
We introduce the algorithm \Algname which takes classical modal formulas in input, and, when successful, effectively generates: (a) (analytic) geometric rules of the labelled calculus G3K, and (b) cut-free derivations (of a certain `canonical' shape) of each given input formula in the geometric labelled calculus obtained by adding the  rule in output to G3K. We show that \Algname successfully terminates whenever its input formula is a (definite) analytic inductive formula, in which case, the geometric axiom corresponding to the output rule is, modulo logical equivalence, the first-order correspondent of the input formula.  In proving the correctness of MASSA, we also show that the algorithm for the elimination of second-order quantifiers SCAN is complete with respect to the class of inductive analytic formulas. Finally, we show how our algorithm can be extended to the class of inductive formulas and to modal logic with quantifiers.
\end{abstract}

\tableofcontents

\section{Introduction}
\label{sec:Introduction_MASSA}

The labelled calculus \GTK was presented by Sara Negri in \cite{negri2005proof} as a basic   G3-style sequent calculus for the normal modal logic $K$ (see \cite[Chapter 3]{Negri_van_Plato} and \cite[Chapter 11]{negri2011proof} for the genesis of this calculus). The calculus \GTK shares many of the characteristic properties of Gentzen's original sequent calculus G3 for classical logic; for instance, all its rules are invertible, and the basic structural rules (weakening, contraction and cut) are admissible. Moreover, in \cite{negri2005proof}, Negri introduces a general method for  extending \GTK so as to capture a large class of axiomatic extensions of $K$; namely,   all those axiomatic extensions of $K$ which define  {\em elementary} (i.e.~first-order definable) classes of Kripke frames, and such that their defining  first-order conditions are, modulo logical equivalence,    geometric implications. The rules generated by Negri's  method for capturing these axiomatic extensions of $K$ are defined on the basis of their corresponding geometric implications, and are referred to as {\em geometric rules}. Negri uniformly shows that the structural rules (and cut in particular) are  admissible in the calculi obtained by extending \GTK with geometric rules. 

One important subclass of geometric implications is given, modulo logical equivalence, by  the first-order correspondents of the class of  {\em analytic inductive formulas} in classical modal logic. General (i.e.~not necessarily analytic) inductive formulas have been introduced by Goranko and Vakarelov in \cite{Goranko:Vakarelov:2006}, and have been shown to have  (local) first-order correspondents, which can be effectively computed via an algorithmic correspondence procedure introduced in \cite{Conradie:et:al:SQEMAI}.

In the present paper, we refine Negri's method for extending G3K, and introduce the  algorithm \Algname (Minimal ASSumption Algorithm) for generating analytic labelled rules uniformly and equivalently capturing the {\em analytic inductive} axiomatic extensions of $K$. An important difference between the algorithmic rule-generation method introduced in this paper and Negri's method is that the present method takes {\em modal formulas} in input, and, if the input formula is analytic inductive (cf.~Section \ref{sec:AnalyticInductiveFormulas}), it computes its equivalent analytic rule {\em directly} from the input formula, via a computation which incorporates the effective generation of its first-order correspondent, whereas Negri's method starts from geometric implications in the first-order frame correspondence language, and generates rules which are equivalent to those modal formulas which are assumed to have a first-order correspondent which is (logically equivalent to) a geometric implication.

Ultimately, modal correspondence theory is concerned with the elimination of certain second-order monadic quantifiers. The soundness proof of MASSA exploits the algorithm for the elimination of second-order quantifiers SCAN \cite{GaOh92c}, invented by Dov M. Gabbay and Hans J\"urgen Ohlbach.

\paragraph{Structure of the chapter.} This paper is structured as follows. In Section \ref{sec: prelim_MASSA}, we collect basic definitions and results on \GTK and analytic inductive formulas in classical modal logic;  
in Section \ref{sec:algorithm_MASSA}, we introduce the algorithm \Algname and provide intuitive motivation for some of its key steps.
In Section \ref{sec: examples_MASSA},  we illustrate how \Algname works, by running it on some well known modal axioms and more useful examples; in Section \ref{sec:SCAN_successful}, we prove that SCAN is complete with respect to the class of analytic inductive formulas; in Section \ref{exe:MASSA_termination_auxiliaries_definitions}, we prove the soundness and termination of the algorithm \Algname leveraging the soundness of SCAN; in Section \ref{sec:extending_MASSA}, we illustrate how to extend MASSA to more general settings; we conclude in Section \ref{sec: conclusions}.

\section{Preliminaries}
\label{sec: prelim_MASSA}

\subsection{The labelled  calculus \GTK}
\label{sec:LabelledRelationalSequentCalculi}

In what follows, we adopt the usual conventions: $p, q, \ldots$ denote proposition variables, $x, y, z, \ldots$ are labels (corresponding to world-variables in the intended interpretation on Kripke frames), given a label $x$ and a modal formula $A$, well-formed formulas are of the type $x: A$, while $\varphi, \psi, \ldots$ are meta-variables for well-formed formulas. $\Gamma, \Delta, \ldots$ are meta-variables for sets of wffs, and a sequent is an expression of the form $\Gamma \vd \Delta$. Given a sequent $S = \Gamma \vd \Delta$, if the formula $\varphi \in \Gamma$ (resp.~$\varphi \in \Delta$), we say that $\varphi$ occurs in \emph{precedent} (resp.~\emph{succedent}) position in $S$.


Below, we list the rules of the labelled relational sequent calculus \GTK for the basic normal modal logic K, where cut, weakening, contraction, and necessitation are admissible rules (see for instance \cite{negri2005proof}). In the list below, we explicitly mention the cut rule and we do not include the rules for negation. The propositional and modal rules are all invertible. 

\begin{center}
\begin{tabular}{rl}

\mc{2}{c}{\rule[-1.85mm]{0mm}{8mm}\textbf{Initial rules and cut rule}\rule[-1.85mm]{0mm}{6mm}} \\

\mc{2}{c}{
\AXC{$ \ $}
\LL{$\abot_L$}
\UIC{$\Gamma, \nx : \abot \vd \Delta$}
\DP
 \ \   
\AXC{$\ $}
\RL{Id$_{x:p}$}
\UIC{$\Gamma, \nx : p \vd \nx : p, \Delta$}
\DP
}
\end{tabular}

\begin{tabular}{c}
\\
\AXC{$\Gamma \vd x: p, \Delta$}
\AXC{$\Gamma', x: p \vd \Delta'$}
\RL{Cut}
\BIC{$\Gamma, \Gamma' \vd \Delta, \Delta'$}
\DP
\end{tabular}

\begin{tabular}{rl}

\mc{2}{c}{\rule[-1.85mm]{0mm}{8mm} \textbf{Invertible propositional rules}\rule[-1.85mm]{0mm}{8mm}} \\

\AXC{$\Gamma, \nx : A, \nx : B \vd \Delta$}
\LL{$\aand_L$}
\UIC{$\Gamma, \nx : A \aand B \vd \Delta$}
\DP
 & 
\AXC{$\Gamma \vd \nx : A, \Delta$}
\AXC{$\Gamma \vd \nx : B, \Delta$}
\RL{$\aand_R$}
\BIC{$\Gamma \vd \nx : A \aand B, \Delta$}
\DP
 \\

 & \\

\AXC{$\Gamma, \nx : A \vd \Delta$}
\AXC{$\Gamma, \nx : B \vd \Delta$}
\LL{$\aor_L$}
\BIC{$\Gamma, \nx : A \aor B \vd \Delta$}
\DP
 & 
\AXC{$\Gamma \vd \nx : A, \nx : B, \Delta$}
\RL{$\aor_R$}
\UIC{$\Gamma \vd \nx : A \aor B, \Delta$}
\DP
 \\

 & \\

\AXC{$\Gamma \vd \nx : A, \Delta$}
\AXC{$\Gamma, \nx : B \vd \Delta$}
\LL{$\ararr_L$}
\BIC{$\Gamma, \nx : A \ararr B \vd \Delta$}
\DP
 & 
\AXC{$\Gamma, \nx : A \vd \nx : B, \Delta$}
\RL{$\ararr_R$}
\UIC{$\Gamma \vd \nx : A \ararr B, \Delta$}
\DP
 \\
 
 & \\



\end{tabular}

\begin{tabular}{rl}

\mc{2}{c}{\rule[-1.85mm]{0mm}{8mm}\textbf{Invertible modal rules${}^\ast$}\rule[-1.85mm]{0mm}{8mm}} \\

\AXC{$\nx\NR\ny, \Gamma, \nx : \wbox A, \ny : A \vd \Delta$}
\LL{$\wbox_L$}
\UIC{$\nx\NR\ny, \Gamma, \nx : \wbox A \vd \Delta$}
\DP
 & 
\AXC{$\nx\NR\ny, \Gamma \vd \ny : A, \Delta$}
\RL{$\wbox_R$}
\UIC{$\Gamma \vd \nx : \wbox A, \Delta$}
\DP
 \\

 & \\

\AXC{$\nx\NR\ny, \Gamma, \ny : A \vd \Delta$}
\LL{$\wdia_L$}
\UIC{$\Gamma, \nx : \wdia A \vd \Delta$}
\DP
 & 
\AXC{$\nx\NR\ny, \Gamma \vd \ny : A, \nx : \wdia A, \Delta$}
\RL{$\wdia_R$}
\UIC{$\nx\NR\ny, \Gamma \vd \nx : \wdia A, \Delta$}
\DP

 \\
 
\end{tabular}

\begin{tabular}{rl}

\mc{2}{c}{\rule[-1.85mm]{0mm}{8mm}\textbf{Equality rules}\rule[-1.85mm]{0mm}{8mm}} \\

\AXC{$x=x, \Gamma \vd \Delta$}
\LL{\small Eq-Ref}
\UIC{$\Gamma \vd \Delta$}
\DP
 & 
\AXC{$y=z, x=y, x=z, \Gamma \vd \Delta$}
\LL{\small Eq-Trans}
\UIC{$x=y, x=z, \Gamma \vd \Delta$}
\DP
 \\

 & \\

\AXC{$yRz, x=y, xRz, \Gamma \vd \Delta$}
\LL{\small $\textrm{Repl}_{R1}$}
\UIC{$x=y, xRz, \Gamma \vd \Delta$}
\DP
 & 
\AXC{$xRz, y=z, xRy, \Gamma \vd \Delta$}
\LL{\small $\textrm{Repl}_{R2}$}
\UIC{$y=z, xRy, \Gamma \vd \Delta$}
\DP

 \\

 & \\

\mc{2}{c}{
\AXC{$x=y, y : A, x : A, \Gamma \vd \Delta$}
\LL{\small Repl}
\UIC{$x=y, x : A, \Gamma \vd \Delta$}
\DP}\\

 & 
 
\end{tabular}

\end{center}

\noindent ${}^\ast$Side condition: the label $y$ must not occur in the conclusion of $\wbox_R$ and $\wdia_L$.

\begin{remark}
The logical rules above (namely Propositional and Modal rules) reflect the semantic clauses of each connective in the intended Kripke semantics. 
Logical rules can be  grouped together as \emph{tonicity rules} ($\aand_R, \aor_L, \ararr_L, \wbox_L, \wdia_R$) versus \emph{translation rules} ($\aand_L, \aor_R, \ararr_R, \wbox_R, \wdia_L$). Tonicity rules specify the arity of a connective (i.e.~a connective of arity $n$ is introduced by a tonicity rule with $n$ premises) and its tonicity (i.e.~if the connective is positive or negative in each coordinate). The translation rules convert a proxy occurring in the premise (either the comma or a relational atom) into a logical connective (namely, the main connective of the principal formula occurring in the conclusion).
\end{remark}

Below we list the non-invertible versions of the tonicity logical rules. We sometimes refer to them as multiplicative rules.

\begin{center}
\begin{tabular}{rl}

\mc{2}{c}{\rule[-1.85mm]{0mm}{8mm} \textbf{Non-invertible tonicity propositional rules}\rule[-1.85mm]{0mm}{8mm}} \\

\AXC{$\Gamma, \nx : A \vd \Delta$}
\AXC{$\Gamma', \nx : B \vd \Delta'$}
\LL{$\aor_L$}
\BIC{$\Gamma, \Gamma', \nx : A \aor B \vd \Delta, \Delta'$}
\DP

 & 
 
\AXC{$\Gamma \vd \nx : A, \Delta$}
\AXC{$\Gamma' \vd \nx : B, \Delta'$}
\RL{$\aand_R$}
\BIC{$\Gamma, \Gamma', \vd \nx : A \aand B, \Delta, \Delta'$}
\DP

 \\ 

 & \\

\mc{2}{c}{ 
\AXC{$\Gamma \vd \nx : A, \Delta$}
\AXC{$\Gamma', \nx : B \vd \Delta'$}
\LL{$\ararr_L$}
\BIC{$\Gamma, \Gamma', \nx : A \ararr B \vd \Delta, \Delta'$}
\DP}
 \\
 

\end{tabular}

\begin{tabular}{rl}
\mc{2}{c}{\rule[-1.85mm]{0mm}{8mm}\textbf{Non-invertible tonicity modal rules}\rule[-1.85mm]{0mm}{8mm}} \\

\AXC{$\nx\NR\ny, \Gamma, \ny : A \vd \Delta$}
\LL{$\wbox_L$}
\UIC{$\nx\NR\ny, \Gamma, \nx : \wbox A \vd \Delta$}
\DP
 & 
\AXC{$\nx\NR\ny, \Gamma \vd \ny : A, \Delta$}
\RL{$\wdia_R$}
\UIC{$\nx\NR\ny, \Gamma \vd \nx : \wdia A, \Delta$}
\DP
 \\
\end{tabular}

\end{center}


\begin{lemma}
\label{lemma: phi implies phi}
For any modal formula $A$, the sequent \,$\Gamma, x: A\vd x:A, \Delta$\, is derivable in G3K. 
\end{lemma}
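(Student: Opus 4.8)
The plan is to argue by induction on the structure of the formula $A$, using crucially that the statement is uniform in the side contexts $\Gamma$ and $\Delta$, so that the inductive hypothesis can be invoked on arbitrary (larger) contexts. Note that no appeal to the admissibility of weakening is needed, since the initial rules already carry arbitrary side contexts.

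For the base cases: if $A = p$ is a proposition variable, then $\Gamma, \nx:p \vd \nx:p, \Delta$ is literally an instance of the initial rule $\mathsf{Id}_{\nx:p}$; if $A = \abot$, then $\Gamma, \nx:\abot \vd \nx:\abot, \Delta$ is an instance of $\abot_L$ (reading its succedent as $\nx:\abot, \Delta$).

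For the inductive step I would, for each connective, read the relevant invertible rules from conclusion to premises, first decomposing the principal formula in one position and then in the other, until every leaf is an instance of the inductive hypothesis on an immediate subformula. For example, for $A = B \aand C$ I apply $\aand_L$ to the precedent occurrence and then $\aand_R$ to the succedent occurrence, reducing the goal to $\Gamma, \nx:B, \nx:C \vd \nx:B, \Delta$ and $\Gamma, \nx:B, \nx:C \vd \nx:C, \Delta$, which are instances of the inductive hypothesis for $B$ and for $C$ (with the remaining formulas absorbed into the side contexts). The cases $A = B \aor C$ and $A = B \ararr C$ are entirely analogous, using the corresponding propositional rules (for implication, applying $\ararr_R$ to the succedent and $\ararr_L$ to the precedent yields the two leaves $\Gamma, \nx:B \vd \nx:B, \nx:C, \Delta$ and $\Gamma, \nx:B, \nx:C \vd \nx:C, \Delta$).

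The modal cases are where I expect the only genuine care to be required, because of the eigenvariable side condition on $\wbox_R$ and $\wdia_L$. For $A = \wbox B$, I first apply $\wbox_R$ to the succedent, introducing a label $\ny$ fresh for the conclusion (so the side condition is met), reaching $\nx\NR\ny, \Gamma, \nx:\wbox B \vd \ny:B, \Delta$; then I apply $\wbox_L$ along the very same relational atom $\nx\NR\ny$, reaching $\nx\NR\ny, \Gamma, \nx:\wbox B, \ny:B \vd \ny:B, \Delta$, which is an instance of the inductive hypothesis on $B$ at label $\ny$. The case $A = \wdia B$ is dual: apply $\wdia_L$ introducing a fresh $\ny$, then $\wdia_R$, landing again on an instance of the inductive hypothesis on $B$. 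The key point is simply that the freshness of $\ny$ discharges the side conditions, and that the relational atom created by the box/diamond rule is exactly the one the dual rule then consumes.
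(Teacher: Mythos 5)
Your proposal is correct and follows essentially the same route as the paper's proof: induction on $A$, with the base cases read off directly from the initial rules, and each inductive case handled by bottom-up decomposition of the succedent and precedent occurrences in the order dictated by the connective (right rule first for $\wbox, \ararr, \vee$; left rule first for $\wdia, \aand$), closing the branches with the inductive hypothesis at enlarged contexts. Your explicit attention to the eigenvariable conditions in the modal cases and to the uniformity in $\Gamma, \Delta$ just spells out what the paper leaves implicit.
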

\begin{proof}
By induction on $A$. The cases of $A\coloneqq \bot$ and $A\coloneqq p\in \mathsf{Prop}$ are immediate. If $A: =\ast(\ol{A'})$ where $\ast\in \{\wbox, \ararr, \vee\}$, then the required proof is obtained by applying, from bottom to top, $\ast_R$ to the occurrence of $A$ in succedent position,  followed by a bottom-up application of $\ast_L$ to the occurrence of $A$ in precedent position, and then using the induction hypothesis on each $A'$ in $\ol{A'}$. Similarly, the required proof if $A: =\ast(\ol{A'})$ where $\ast\in \{\wdia,  \aand\}$,  is obtained by applying, from bottom to top, $\ast_L$ followed by $\ast_R$.
\end{proof}
Notice that the derivation generated in the proof of the lemma above introduces {\em every} subformula of each occurrence of $\varphi$   via a logical rule, and, modulo renaming variables, we can assume w.l.o.g.~that every new label introduced proceeding bottom-up be fresh in the entire derivation (and not just in every branch, as already required by the side conditions of the rule $\wbox_R$ and $\wdia_L$).
Below we recall the definition of a geometric implication. 
\begin{definition}(cf.~\cite[Section 3]{Neg03})
\label{def:GeometricImplication}
A \emph{geometric implication} is a first-order sentence of the form 
$$\forall\ol{x}(s \ararr t),$$
where both $s$ and $t$ are {\em geometric formulas}, i.e.~first-order formulas not containing $\ararr$ or $\forall$. Geometric implications can be equivalently rewritten as conjunctions of \emph{geometric axioms}, namely, sentences of the type
\begin{center}
$\forall \ol{x}(P_1 \wedge ... \wedge P_m \ararr \ol{\exists y_1}M_1 \vee ... \vee \ol{\exists y_n}M_n)$
\end{center}
where each $P_i$ is an atomic formula with no free occurrences of any variable $y$ in $\ol{y}$, and $M_j$ is a conjunction of atomic formulas $Q_{j_1} \wedge ... \wedge Q_{j_{k_j}}$. The rule scheme corresponding to geometric axioms takes the form
\begin{center}
\AXC{$\ol{Q_1}[\ol{y_1}/\ol{z_1}], \ol{P}, \Gamma \vd \Delta$}
\AXC{...}
\AXC{$\ol{Q_n}[\ol{y_n}/\ol{z_n}], \ol{P}, \Gamma \vd \Delta$}
\RL{$GR$}
\TIC{$\ol{P}, \Gamma \vd \Delta$}
\DP
\end{center}
where $\ol{Q_i}[\ol{y_i}/\ol{z_i}]$ denotes the simultaneous replacement of each $z$ in $\ol{z_i}$ with the corresponding $y$ in $\ol{y_i}$, in every $Q$ in $\ol{Q_i}$. In this scheme, 
the eigenvariables in $\ol{y_i}$ are not free in $\ol{P}, \Delta, \Gamma$.  Rules corresponding to geometric axioms are referred to as \emph{geometric (labelled) rules}.
\end{definition}

A \emph{geometric labelled calculus} is any extension of \GTK with geometric labelled rules. 
\begin{theorem}(cf.~\cite[Theorem 4.13]{negri2005proof})
Any geometric labelled calculus preserves cut admissibility.
\end{theorem}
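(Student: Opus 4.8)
The plan is to prove cut admissibility by the standard Gentzen-style double induction, treating the geometric rules as the only genuinely new ingredient on top of the cut-elimination argument already available for \GTK (cf.~\cite{negri2005proof}). The primary induction is on the \emph{weight} (complexity) of the cut formula, and the secondary induction is on the \emph{cut height}, i.e.~the sum of the heights of the derivations of the two premises of the topmost cut. As prerequisites I would first record (height-preserving) admissibility of weakening and contraction, together with admissibility of the substitution of labels; for contraction one checks that the geometric rule scheme of Definition \ref{def:GeometricImplication} satisfies the usual closure condition, which it does by construction, since every instance obtained by identifying two of its principal relational atoms is again an instance of the same scheme.

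The key structural observation is that, since cut is performed on \emph{labelled formulas} $x:A$ whereas the principal formulas of every geometric rule are \emph{relational atoms} (the $\ol{P}$ in the conclusion and the $\ol{Q_i}$ in the premises), the cut formula can never be principal in a geometric rule. Consequently the addition of geometric rules introduces \emph{no new principal--principal reduction}: all the principal--principal cases are exactly those already settled for \GTK. The only genuinely new cases are those in which at least one premise of the cut ends with a geometric rule $GR$ and the cut formula occurs in it merely as a parametric (side) formula.

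In each such case the plan is to permute the cut upward past $GR$, thereby strictly reducing the cut height and invoking the secondary induction hypothesis. This goes through smoothly because a geometric rule copies its side contexts $\Gamma, \Delta$ unchanged into each premise and only augments the \emph{precedent} with the atomic relational formulas $\ol{Q_i}$: the cut formula and the entire context coming from the other premise of the cut are simply carried along, so that one cuts against each premise of $GR$ in turn and then reapplies $GR$ below. Before permuting, one renames the eigenvariables $\ol{y_i}$ of $GR$ (which are fresh by its side condition) so that they do not occur in the other premise of the cut; since the geometric rule scheme is schematic in its labels, the renamed instance is again a legitimate instance, and freshness is preserved in the resulting derivation. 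Cuts on relational atoms, should they be admitted, are even simpler, as no rule introduces a relational atom in succedent position, so such a cut permutes all the way up the derivation of the left premise to an initial sequent.

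The main obstacle I expect is the bookkeeping of eigenvariables and the interaction with the equality and replacement rules, which themselves perform substitutions of labels. Here one must verify that, after permuting the cut and after any label substitution induced by those rules, the resulting instance of $GR$ is still valid, i.e.~that the freshness of $\ol{y_i}$ is maintained and that no captured variable spoils the side condition. This is exactly where the closure of the geometric rule scheme under substitution of labels for labels is used, and it is the point demanding the most careful case analysis; the remaining permutation cases are routine and identical to those in the cut-elimination proof for \GTK.
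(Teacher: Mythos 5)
Your proposal is correct and follows essentially the same route as the proof this paper relies on (it cites Negri's Theorem 4.13 rather than reproving it): a Gentzen-style double induction on cut-formula weight and cut height, with height-preserving admissibility of weakening, contraction and label substitution as prerequisites, and the key observation that the principal formulas of geometric rules are relational atoms, so no new principal--principal cases arise and cuts permute above geometric rules after renaming eigenvariables. The one point genuinely requiring care---the closure condition under identification of labels, needed for contraction admissibility---is exactly the one you flag, and it is handled in the cited reference just as you describe.
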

\subsection{Analytic inductive formulas}
\label{sec:AnalyticInductiveFormulas}

In this subsection, we specialize and adapt the definition of analytic inductive inequality (cf.~\cite[Definition 55]{GMPTZ}, \cite[Definition 2.14]{de2021slanted}, \cite[Section 2.3]{ChnGrePalTzi21}) to the language and properties of classical modal logic.

The language of the basic normal modal logic K  is recursively defined from a set $\mathsf{Prop}$ of proposition variables as follows:
\begin{center}
$\varphi ::= p \ | \ \bot \ | \ \neg \varphi \ |\  \varphi \wedge \varphi \ |\ \varphi \vee \varphi \ |\ \varphi \rightarrow \varphi \ |\  \Diamond \varphi \ |\ \Box \varphi$,
\end{center}
where $p$ ranges over $\mathsf{Prop}$. 
In what follows, we will need to keep track of the multiplicity of occurrences of  proposition variables in formulas, as well as the order-theoretic properties of the various coordinates of the term-functions associated with formulas. Therefore, we will write e.g.~$\psi(!\ol{x})$ to signify that each variable in the vector $\ol{x}$ of  placeholder variables   occurs exactly once in $\psi$.     Moreover, we will write e.g.~$\psi(!\ol{x}, !\ol{y})$ to mean that $\psi$ (resp.~the term-function $\psi^\mathbb{A}$ in a modal algebra $\mathbb{A}$) is positive (resp.~monotone) in each $x$-coordinate and negative (resp.~antitone) in each $y$-coordinate. In other contexts, we will sometimes need to group coordinates according to different criteria. In each context in which  this is the case, we will specifically indicate these criteria. Negative (resp.~positive) {\em Skeleton} formulas $\psi(!\ol{x}, !\ol{y})$ (resp.~$\varphi(!\ol{x}, !\ol{y})$) are defined by simultaneous recursion as follows:
\begin{center}
\begin{tabular}{rc l}
$\psi (!\ol{x}, !\ol{y})$ &$ ::=$ &$ x \ | \ \neg \varphi \ |\  \psi \wedge \psi \ |\   \psi \vee \psi \ |\ \varphi \rightarrow \psi  \ |\ \Box \psi$,\\

$\varphi(!\ol{x}, !\ol{y})$ &$ ::=$ &$  x \ | \ \neg \psi \ |\  \varphi \wedge \varphi \ |\  \varphi \vee \varphi \ |\  \Diamond \varphi$.
\end{tabular}
\end{center}
Positive Skeleton formulas will sometimes be referred to as {\em negative PIA} formulas. {\em Definite} negative Skeleton (resp.~PIA) formulas are  defined by simultaneous recursion as follows:
\begin{center}
\begin{tabular}{rc l}
$\psi (!\ol{x}, !\ol{y})$ &$ ::=$ &$ x \ | \ \neg \varphi \ |\    \psi \vee \psi \ |\ \varphi \rightarrow \psi  \ |\ \Box \psi$,\\

$\varphi(!\ol{x}, !\ol{y})$ &$ ::=$ &$  x \ | \ \neg \psi \ |\  \varphi \wedge \varphi  \ |\  \Diamond \varphi$.
\end{tabular}
\end{center}
Modulo exhaustively distributing all the other connectives over  $\aor$ and $\aand$, any negative Skeleton (resp.~PIA) formula can be equivalently rewritten as a conjunction (resp.~disjunction) of definite negative Skeleton (resp.~PIA) formulas (cf.~\cite[Lemma 2.9]{ChnGrePalTzi21}). 
\begin{definition}
A modal formula $\psi'(\ol{p})$ is (negative) {\em analytic inductive}  if its negative normal form (NNF) is $ \psi(\ol{\beta}/!\ol{x}, \ol{\delta}/!\ol{y})$ such that:
\begin{enumerate}
    \item $\psi(!\ol{x}, !\ol{y})$ (which we refer to as the {\em Skeleton} of $\psi'$) is a negative Skeleton formula, and is monotone {\em both} in its $x$-coordinates and in its $y$-coordinates; \item each $\beta$ in $\ol{\beta}$ and  $\delta$ in $ \ol{\delta}$ is a negative PIA formula;
    \item the term-function $\delta^\mathbb{A}(!\ol{x})$ associated with each $\delta(\ol{p}/!\ol{x})$ in $\ol{\delta}$ is monotone in each coordinate;
    \item the term-function $\beta^\mathbb{A}(!\ol{x}, !\ol{y})$ associated with each $\beta$ in $\ol{\beta}$ is monotone in each $x$-coordinate and antitone in each $y$-coordinate; 
    \item the transitive closure $<_\Omega$ of the relation $\Omega$ (defined below) is a well-founded strict order  on $\ol{p}$, where for all $p, p'$ in  $\ol{p}$,  $(p, p')\in \Omega$ iff some $\beta\in \ol{\beta}$ exists s.t.~$\beta = \beta(\ol{p_1}/!\ol{x}, \ol{p_2}/!\ol{y})$ and $p'$ occurs in $\ol{p_1}$ and $p$ occurs in $\ol{p_2}$, and the lowest common node in the branches ending in $p'$ and $p$ in the generation tree of $\beta$ is a $\aand$-node. 
\end{enumerate}
In an analytic inductive formula $\psi'$ as above, the variable occurrences in the $y$-coordinates of each $\beta$ in $\ol{\beta}$ are referred to as the {\em critical} occurrences in $\psi'$. All the other variable occurrences are {\em non-critical}.
An analytic inductive formula is {\em Sahlqvist} if the relation $\Omega$ is empty, and is {\em definite} if its Skeleton is definite.
\end{definition}
As discussed above, for any analytic inductive formula $\psi': = \psi(\ol{\beta}/!\ol{x}, \ol{\delta}/!\ol{y})$, any negative PIA subformula $\beta$ and $\delta$ of $\psi'$  can be equivalently rewritten as a disjunction of definite negative PIA formulas (cf.~\cite[Lemma 2.9]{ChnGrePalTzi21}). Hence,  once these $\vee$-nodes have reached the root of $\beta$ by distributing all the other connectives over them, they can all be considered part of the Skeleton of $\psi'$. Hence, when representing an analytic inductive formula $\psi'$ as $\psi(\ol{\beta}/!\ol{x}, \ol{\delta}/!\ol{y})$, we can assume w.l.o.g.~that each $\beta$ and $\delta$ is a  {\em definite} negative PIA formula, and that there is {\em exactly one} critical occurrence of a proposition variable in each $\beta$ in $\ol{\beta}$. To emphasise this, we  sometimes write $\beta$ as $\beta_p$. 


\begin{example}
\label{examples}
\begin{enumerate}
\item The formula $\psi'(p): = \wdia p\ararr \wbox p$  can be rewritten in NNF as $\psi(\beta/x, \delta/y)$ where $\psi(x, y): = \wbox x\aor \wbox y$, and $\beta(p): =  \neg p$, and $\delta(p): = p$, and is hence (negative) analytic Sahlqvist.
\item The formula $\psi'(p): = \wbox p\ararr \wdia p$  can be rewritten in NNF as $\psi(\beta/x, \delta/y)$ where $\psi(x, y): =  x\aor  y$, and $\beta(p): = \wdia  \neg p$ and $ \delta(p): = \wdia p$, and is hence (negative) analytic Sahlqvist.
\item The formula $\psi'(p): = \wdia \wbox p\ararr \wbox \wdia p$  can be rewritten in NNF as $\psi(\beta/x, \delta/y)$ where $\psi(x, y): = \wbox x\aor \wbox y$, and $\beta(p): = \wdia \neg p$ and $ \delta(p): = \wdia p$, and is hence (negative) analytic Sahlqvist.
\item The formula $\psi'(p_1, p_2): = \wbox (p_1 \ararr p_2) \ararr (\wbox p_1 \ararr \wbox p_2)$ can be  rewritten in NNF as 
$\psi(\beta_1/x_1, \beta_2/x_2, \delta/y)$ where $\psi(x_1, x_2, y): = x_1\aor (x_2 \aor \wbox_t y)$, and $\beta_1(p_1, p_2): = \wdia_y (p_1 \aand \neg p_2)$ and $\beta_2 (p_1): = \wdia \neg p_1$ and $ \delta(p_2): =  p_2$, and is hence (negative) analytic inductive with $p_1<_{\Omega} p_2$.
\item The formula $\psi'(p_1, p_2): = \wbox (\wbox p_1 \ararr p_2) \lor \wbox (\wbox p_2 \ararr p_1)$ can be  rewritten in NNF as 
$\psi(\beta_1/x_1, \beta_2/x_2, \delta_1/y_1, \delta_2/y_2)$ where $\psi(x_1, x_2, y_1, y_2): = \wbox (x_1 \aor y_2) \lor \wbox (x_2 \aor y_1)$, and $\beta_1(p_1): = \wdia \neg p_1$ and $\beta_2 (p_2): = \wdia \neg p_2$, and $ \delta_1(p_1): =  p_1$ and $ \delta_(p_2): =  p_2$, and is hence (negative) analytic Sahlqvist.
\end{enumerate}
\end{example}

\begin{theorem}
\label{thm: inductive have fo corr}
(cf.~\cite[Theorem 37]{Goranko:Vakarelov:2006}) Every (analytic) inductive formula has a first-order correspondent.
\end{theorem}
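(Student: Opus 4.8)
The plan is to prove this via the Ackermann-Lemma-Based Algorithm (\ALBA), in the style of \cite{Conradie:et:al:SQEMAI}, exploiting the fact that the complex algebra $\mathbb{F}^+$ of any Kripke frame $\mathbb{F}$ is a \emph{perfect} Boolean algebra with operators: in it every connective is residuated or adjoint in each coordinate, and the completely join-irreducible (resp.~meet-irreducible) elements are exactly the singletons (resp.~complements of singletons). The first step is to recast frame validity as an algebraic statement: an analytic inductive formula $\psi'(\ol{p})$ is valid on $\mathbb{F}$ iff $\top \le \psi'$ holds in $\mathbb{F}^+$ under every assignment of $\ol{p}$. By perfectness and the fact that join-irreducibles are join-dense, this is equivalent to a quasi-inequality obtained by the \emph{first-approximation} step, parametrised by a fresh nominal $\mathbf{i}$ and conominal $\mathbf{m}$ ranging over singletons and co-singletons. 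Since $\psi'$ is already given in negative normal form as $\psi(\ol{\beta}/!\ol{x}, \ol{\delta}/!\ol{y})$, no further preprocessing is needed.

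Next I would run the \emph{reduction stage}. Using the representation $\psi' = \psi(\ol\beta/!\ol x, \ol\delta/!\ol y)$, the monotone negative Skeleton $\psi$ is unpacked top-down by the adjunction and approximation rules valid in $\mathbb{F}^+$, pushing $\mathbf{i}$ and $\mathbf{m}$ inward until each resulting inequality has, on one side, a single PIA formula $\beta_p$ (or $\delta$) and, on the other, a term built purely from nominals, conominals, and the modal operators. Because each $\beta$ and $\delta$ is, by condition 2, a \emph{definite negative PIA} formula with exactly one critical occurrence, the residuation rules let me ``solve for'' that critical variable, isolating an inequality of the shape $p \le \gamma$ or $\gamma \le p$ with $\gamma$ pure relative to the variables still to be eliminated; the tonicity conditions 3 and 4 provide exactly the monotonicity these residuation steps require. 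The Ackermann lemma then eliminates $p$ from the quasi-inequality.

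The main obstacle is to guarantee that these Ackermann eliminations can be \emph{sequenced} consistently, and here the well-foundedness of the dependency order $<_\Omega$ (condition 5) is the essential ingredient. I would eliminate the variables in an order compatible with $<_\Omega$, treating $<_\Omega$-maximal variables first. The definition of $\Omega$ via the $\aand$-lowest-common-node condition is engineered precisely so that, once every $p'$ with $p <_\Omega p'$ has been removed, all remaining occurrences of $p$ can be gathered with the uniform polarity that the Ackermann lemma demands. Well-foundedness of $<_\Omega$ ensures this process is finite and terminates in a quasi-inequality containing no proposition variables; in the Sahlqvist case ($\Omega$ empty) the order is trivial and the variables may be eliminated in a single round. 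Verifying that the syntactic shape really does deliver the Ackermann side conditions at each stage is the delicate, calculation-heavy part of the argument.

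Finally I would carry out the \emph{translation stage}. The resulting pure quasi-inequality speaks only of nominals and conominals---i.e.~of worlds and their complements---together with $\wbox$ and $\wdia$ interpreted through the accessibility relation. Applying the standard translation (nominals and conominals $\mapsto$ world variables and equalities, $\wbox,\wdia \mapsto$ bounded quantifiers over $R$) turns it into a first-order sentence in the frame-correspondence language which, by the soundness and invertibility of every rule applied, is equivalent over all Kripke frames to the validity of $\psi'$. This sentence is the desired first-order correspondent. As a remark, since analytic inductive formulas form a subclass of the inductive formulas of \cite{Goranko:Vakarelov:2006}, the statement also follows directly from the cited Theorem~37; the \ALBA argument above is the constructive route that the rest of the paper will build upon.
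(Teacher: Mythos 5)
Your proposal is correct in outline, but it is worth noting that the paper does not actually prove this statement at all: Theorem \ref{thm: inductive have fo corr} is imported wholesale from \cite[Theorem 37]{Goranko:Vakarelov:2006} (where it is established via minimal valuations, with the effective computation handled by SQEMA in \cite{Conradie:et:al:SQEMAI}), so your ALBA-based argument is a genuinely different route from the paper's citation-only treatment. It is, however, exactly the machinery the paper itself deploys later: the Appendix justifies ALBA's success on these formulas by citing \cite[Theorem 8.8]{CoPa:non-dist} and then executes precisely the run you describe --- first approximation with a nominal $\nomj$, solving each definite PIA part $\beta_p$ via the residual $RA(\beta_p)$, Ackermann elimination, and translation into the frame language --- so your sketch harmonizes with, and essentially reconstructs, the proof strategy underlying Theorem \ref{theor: main}. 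What the citation buys the paper is brevity and full generality (inductive, not merely analytic inductive, formulas); what your route buys is constructivity and the algebraic set-up that the rule-generation argument reuses. Two caveats on your sketch: first, the delicate part you flag but defer --- verifying that the Ackermann side conditions hold at every elimination step --- is the actual mathematical content of the theorem, and in the literature it is precisely \cite[Theorem 8.8]{CoPa:non-dist} (or Goranko--Vakarelov's original argument), so a self-contained proof would still have to carry out that induction; second, your choice to eliminate $<_\Omega$-maximal variables first is defensible but conventional --- either direction can be made to work, and indeed the paper's own definition of $\Omega$ and its Example \ref{examples}(iv) disagree about the orientation of the order --- what matters, as you correctly identify, is well-foundedness, which rules out the circular dependencies that would block the Ackermann step.
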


\subsection{Analytic inductive formulas with signed trees}\label{Inductive:Fmls:Section}
In this subsection, we recall the definitions of inductive LE-inequalities introduced in \cite{conradie2019algorithmic} and their corresponding `analytic' restrictions introduced in \cite{GMPTZ} in the boolean setting. This subsection generalizes some of the definitions already given in Subsection \ref{sec:AnalyticInductiveFormulas}, and it will be important when proving the termination and soundness of \Algname.

\begin{definition}[Signed generation tree]
	\label{def: signed gen tree}
	The \emph{positive} (resp.~\emph{negative}) {\em generation tree} of any formula $s$ is defined by labelling the root node of the generation tree (i.e.~syntax tree) of $s$ with the sign $+$ (resp.~$-$), and then propagating the labelling on each remaining node as follows:
	\begin{itemize}
		\item For any node labelled with $ \lor$, $\land$, $\wdia$, or $\wbox$ ,assign the same sign to its children nodes.
		\item For any node labelled with $\neg$, assign the opposite sign to its children; for any node labelled with $\rightarrow$, assign the opposite sign to its left children and the same sign to its right children.
	\end{itemize}
	Nodes in signed generation trees are \emph{positive} (resp.~\emph{negative}) if they are signed $+$ (resp.~$-$).
\end{definition}

Signed generation trees will  mostly be used in the context of term inequalities $s\leq t$. In this context, we will typically consider the positive generation tree $+s$ for the left-hand side and the negative one $-t$ for the right-hand side. We will also say that a term-inequality $s\leq t$ is \emph{uniform} in a given variable $p$ if all occurrences of $p$ in both $+s$ and $-t$ have the same sign, and that $s\leq t$ is $\varepsilon$-\emph{uniform} in a (sub)array $\overline{r}$ of its variables if each $r\in \overline{r}$ such that $\varepsilon(r) = 1$ (resp.~$\varepsilon(r) = \partial$) occurs positively (resp.~negatively) in $s\leq t$. 

For any term $s(p_1,\ldots p_n)$, any order-type $\varepsilon$ over $n$, and any $1 \leq i \leq n$, an \emph{$\varepsilon$-critical node} in a signed generation tree of $s$ is a leaf node $+p_i$ if $\varepsilon(i) = 1$, and a leaf node $-p_i$ if $\varepsilon (i) = \partial$. An $\varepsilon$-{\em critical branch} in the tree is a branch the leaf of which is an $\varepsilon$-critical node. Variable occurrences corresponding to $\varepsilon$-critical nodes are those used in the runs of the various versions of the algorithm ALBA (cf.~\cite{CoGhPa13, conradie2019algorithmic}) to compute the minimal valuations. 
For every term $s(p_1,\ldots p_n)$ and every order-type $\varepsilon$, we say that $+s$ (resp.~$-s$) {\em agrees with} $\varepsilon$, and write $\varepsilon(+s)$ (resp.~$\varepsilon(-s)$), if every leaf in the signed generation tree of $+s$ (resp.~$-s$) is $\varepsilon$-critical.
 We will also write $+s'\prec \ast s$ (resp.~$-s'\prec \ast s$) to indicate that the subterm $s'$ inherits the positive (resp.~negative) sign from the signed generation tree $\ast s$. Finally, we will write $\varepsilon(\gamma) \prec \ast s$ (resp.~$\varepsilon^\partial(\gamma) \prec \ast s$) to indicate that the signed subtree $\gamma$, with the sign inherited from $\ast s$, agrees with $\varepsilon$ (resp.~with $\varepsilon^\partial$).

\begin{definition}
	\label{def:good:branch}
	A branch in a signed generation tree $\ast s$, with $\ast \in \{+, - \}$, is called a \emph{good branch} if it is the concatenation of two paths $P_1$ and $P_2$, one of which may possibly be of length $0$, such that $P_1$ is a path from the leaf consisting (apart from variable nodes) only of PIA-nodes, and $P_2$ consists (apart from variable nodes) only of Skeleton-nodes. 
	A good branch is \emph{Skeleton} if the length of $P_1$ is $0$.
\end{definition}

\begin{definition}[Inductive inequalities]\label{Inducive:Ineq:Def}
	For any order-type $\varepsilon$ and any irreflexive and transitive relation (i.e.~strict partial order) $\Omega$ on $p_1,\ldots p_n$, the signed generation tree $*s$ $(* \in \{-, + \})$ of a term $s(p_1,\ldots p_n)$ is \emph{$(\Omega, \varepsilon)$-inductive} if for all $1 \leq i \leq n$
	\begin{enumerate}
		\item every $\varepsilon$-critical branch with leaf $p_i$ is good (cf.~Definition \ref{def:good:branch});
		\item for every maximal PIA term\footnote{A maximal PIA term of $*s$ is a subformula $\alpha$ of $*s$ (with the inherited sign) such that all its connectives (if any) are PIA and every subformula properly between $\alpha$ and $*s$ contains a Skeleton connective.} $\alpha(q_1,\ldots,q_m)$ containing a critical propositional atom $q_j$
		\begin{enumerate}
			\item $q_i$ is not critical for every $i\neq j$;
			\item $p_i <_{\Omega} p_j$ for every $i\neq j$.
		\end{enumerate}
	\end{enumerate}
	
	We will refer to $<_{\Omega}$ as the \emph{dependency order} on the variables. An inequality $s \leq t$ is \emph{$(\Omega, \varepsilon)$-inductive} if the signed generation trees $+s$ and $-t$ are $(\Omega, \varepsilon)$-inductive. An inequality $s \leq t$ is \emph{inductive} if it is $(\Omega, \varepsilon)$-inductive for some $\Omega$ and $\varepsilon$.
\end{definition}

In what follows, we refer to formulas $\varphi$ such that only PIA nodes occur in $+\varphi$ (resp.~$-\varphi$) as {\em positive} (resp.~{\em negative}) {\em PIA formulas}, and to formulas $\xi$ such that only Skeleton nodes occur in $+\xi$ (resp.~$-\xi$) as {\em positive} (resp.~{\em negative}) {\em Skeleton formulas}\label{page: positive negative PIA}.
Skeleton formulas $\xi$  in which no $\Delta$-adjoint nodes (i.e.~$-\wedge$ and $+\vee$) occur in $\ast \xi$
are referred to as {\em definite}. The specific order-theoretic properties of definite Skeleton and PIA  formulas entail that these are exactly the formulas which can be fully captured at the structural level in display calculi (cf.~\cite{GMPTZ}). 

\begin{lemma} 
\label{lemma: reduction to definite} For every LE-language $\mathcal{L}$,
\begin{enumerate}
\item if $\gamma$ is a positive PIA (i.e.~negative Skeleton) $\mathcal{L}$-formula,  then $\gamma$ is equivalent to $\bigwedge_{i\in I}\gamma_i$ for some finite set of definite positive PIA (i.e.~negative Skeleton) formulas $\gamma_i$;
\item if $\delta$ is a negative PIA (i.e.~positive Skeleton) $\mathcal{L}$-formula,  then $\delta$ is equivalent to $\bigvee_{j\in j}\delta_j$ for some finite set of definite negative PIA (i.e.~positive Skeleton) formulas $\delta_j$.
\end{enumerate}
\end{lemma}

\begin{definition}[Analytic inductive inequalities]
	\label{def:type5}
	For every order-type $\varepsilon$ and every irreflexive and transitive relation $\Omega$ on the variables $p_1,\ldots p_n$,
	the signed generation tree $\ast s$ ($\ast\in \{+, -\}$) of a term $s(p_1,\ldots p_n)$ is \emph{analytic $(\Omega, \varepsilon)$-inductive}  if
	
	\begin{enumerate}
		\item $\ast s$ is $(\Omega, \varepsilon)$-inductive (cf.~Definition \ref{Inducive:Ineq:Def});
		\item every branch of $\ast s$ is good (cf.~Definition \ref{def:good:branch}).
	\end{enumerate}	
	
	An inequality $s \leq t$ is \emph{analytic $(\Omega, \varepsilon)$-inductive}   if $+s$ and $-t$ are both analytic  $(\Omega, \varepsilon)$-inductive. An inequality $s \leq t$ is \emph{analytic inductive} if is analytic $(\Omega, \varepsilon)$-inductive  for some $\Omega$ and $\varepsilon$. An analytic inductive inequality is {\em definite} if no $-\wedge$ and $+\vee$ nodes occur in its Skeleton.
	\end{definition}	
	
Figure \ref{AnalyticInductiveInequalities} provides a visual representation of the shape of analytic inductive inequalities, where all branches (even non-critical) have to be good.

\begin{figure}[h]
\centering
	\begin{tikzpicture}
	\draw (-5,-1.5) -- (-3,1.5) node[above]{\Large$+$} ;
	\draw (-5,-1.5) -- (-1,-1.5) ;
	\draw (-3,1.5) -- (-1,-1.5);
	\draw (-5.5,0) node{Skeleton ($P2$)} ;
	\draw[dashed] (-3,1.5) -- (-4,-1.5);
	\draw[dashed] (-3,1.5) -- (-2,-1.5);
	\draw (-4,-1.5) --(-4.8,-3);
	\draw (-4.8,-3) --(-3.2,-3);
	\draw (-3.2,-3) --(-4,-1.5);
	\draw[dashed] (-4,-1.5) -- (-4,-3);
	\draw[fill] (-4,-3) circle[radius=.1] node[below]{$+p$};
	\draw
	(-2,-1.5) -- (-2.8,-3) -- (-1.2,-3) -- (-2,-1.5);
	\draw (-2,-3.25)node{$\gamma$};
	\draw (-5.5,-2.25) node{PIA ($P1$)} ;
	\draw (0,0) node{$\leq$};
	\draw (5,-1.5) -- (3,1.5) node[above]{\Large$-$} ;
	\draw (5,-1.5) -- (1,-1.5) ;
	\draw (3,1.5) -- (1,-1.5);
	\draw (5.5,0) node{Skeleton ($P2$)} ;
	\draw[dashed] (3,1.5) -- (4,-1.5);
	\draw[dashed] (3,1.5) -- (2,-1.5);
	\draw (2,-1.5) --(2.8,-3);
	\draw (2.8,-3) --(1.2,-3);
	\draw (1.2,-3) --(2,-1.5);
	\draw[dashed] (2,-1.5) -- (2,-3);
	\draw[fill] (2,-3) circle[radius=.1] node[below]{$+p$};
	\draw
	(4,-1.5) -- (4.8,-3) -- (3.2,-3) -- (4, -1.5);
	\draw (4,-3.25)node{$\gamma'$};
	\draw (0.5,-2.25) node{PIA ($P1$)} ;
	\draw (-1,-2.25) node{PIA} ;
         \draw (5,-2.25) node{PIA} ;
	\end{tikzpicture}
\caption{The shape of analytic inductive inequalities} \label{AnalyticInductiveInequalities}
\end{figure}

\begin{notation}\label{notation: representations of signed generation trees}
We adopt the convention that in graphical representations of signed generation trees the squared variable occurrences are the $\varepsilon$-critical ones, the doubly circled nodes are the Skeleton ones and the single-circle ones are PIA nodes.
\end{notation}

	\begin{example}\label{Ex:Church-Rosser et al}
	\label{example:inductive and analytic inductive}
	The inequality $p\le \Diamond\Box p$ is $\varepsilon$-inductive 
for $\varepsilon(p) = 1$, but is not analytic inductive for any order-type, because the negative generation tree of $\Diamond \Box p$, which has only one branch, is not good. The Church-Rosser inequality $\Diamond \Box p \le \Box \Diamond p$ is analytic $\varepsilon$-inductive for every order-type. 

The inequality $p \ararr (q \ararr r) \le ((p\ararr q) \ararr (\wbox p \ararr r))\wedge \Diamond r$  is 
is an analytic $(\Omega, \varepsilon)$-inductive inequality, e.g.~for $p <_\Omega q  <_\Omega r $ and $\varepsilon(p,q,r)=(1,1,\partial)$. 

Below, we represent the signed generation trees pertaining to the inequalities above (see Notation \ref{notation: representations of signed generation trees}): 
	\begin{center}
		\begin{tikzpicture}
		\node at(-3,0){
			\begin{tikzpicture}
			\tikzstyle{level 1}=[level distance=1cm, sibling distance=2.5cm]
			\tikzstyle{level 2}=[level distance=1cm, sibling distance=1.5cm]
			\tikzstyle{level 3}=[level distance=1 cm, sibling distance=1.5cm]
			\node[draw] at (-1,0) {$\begin{aligned} +p \end{aligned}$}
			;
			\node at (0,0) {$\le$}; 
			
			\node[PIA] at (1,0) {$\begin{aligned} -\wdia \end{aligned}$}
			child{node[Ske]{$\begin{aligned} -\wbox \end{aligned}$}
				child{node{$-p$}}
			}
			;
			\end{tikzpicture}
		};
		\node at(4,0){
			\begin{tikzpicture}
			\tikzstyle{level 1}=[level distance=1cm, sibling distance=2.5cm]
			\tikzstyle{level 2}=[level distance=1cm, sibling distance=1.5cm]
			\tikzstyle{level 3}=[level distance=1 cm, sibling distance=1.5cm]
			\node[Ske] at (-1.5,0) {$\begin{aligned} +\wdia \end{aligned}$}
			child{node[PIA]{$\begin{aligned} +\wbox \end{aligned}$}
				child{node[draw]{$+p$}}
			}
			;
			\node at (0,0) {$\le$}; 
			
			\node[Ske] at (1.5,0) {$\begin{aligned} -\wbox \end{aligned}$}
			child{node[PIA]{$\begin{aligned} -\wdia \end{aligned}$}
				child{node{$-p$}}
			}
			;
			\node[rotate = -90] at (1, -1.5) {$\underbrace{\hspace{1.3cm}}$};
			\node at (0.75,-1.5) {$\rdelta$};
			\node[rotate = -90] at (-2, -1.5) {$\underbrace{\hspace{1.3cm}}$};
			\node at (-2.33,-1.5) {$\textcolor{blue}{\alpha_p}$};
			\end{tikzpicture}
		};
		\node at (0.7,-4){
			\begin{tikzpicture}
			\tikzstyle{level 1}=[level distance=1cm, sibling distance=2.5cm]
			\tikzstyle{level 2}=[level distance=1cm, sibling distance=2.5cm]
			\tikzstyle{level 3}=[level distance=1 cm, sibling distance=1.5cm]
			\node[PIA] at (-3.5,0) {$\begin{aligned} +\ararr \end{aligned}$}
			child{node{$-p$}}          
			child{node[PIA]{$\begin{aligned} +\ararr \end{aligned}$}
				child{node{$-q$}}
				child{node{$+r$}}
			};
			\node at (0,0) {$\le$}; 
			
			\node[Ske] at (4,0) {$\begin{aligned} -\wedge \end{aligned}$}
			child {node[Ske] {$\begin{aligned} -\ararr \end{aligned}$}
				child{node[PIA]{$\begin{aligned} +\ararr \end{aligned}$}
					child{node{$-p$}}
					child{node[draw]{$+q$}}
				}
				child{node[Ske]{$\begin{aligned} -\ararr \end{aligned}$}
					child{node[PIA]{$\begin{aligned} +\wbox \end{aligned}$}
						child{node[draw]{$+p$}}
					}
					child{node[draw]{$-r$}}
				}            
			}
			child{node[PIA]{$\begin{aligned} -\wdia \end{aligned}$}
				child{node[draw]{$-r$}}
			}
			;
			\node[rotate = +90] at (3.7, -3.5) {$\underbrace{\hspace{1.3cm}}$};
			\node at (4.1,-3.5) {$\textcolor{blue}{\alpha_p}$};
			\node[rotate = +90] at (5.7, -1.5) {$\underbrace{\hspace{1.3cm}}$};
			\node at (6.1,-1.5) {$\textcolor{red}{\beta_{r1}}$};
			\node[rotate = -90] at (0.5, -2.5) {$\underbrace{\hspace{1.3cm}}$};
			\node at (0.1,-2.5) {$\textcolor{blue}{\alpha_q}$};
			\node[rotate = -90] at (-5, -1) {$\underbrace{\hspace{2.6cm}}$};
			\node at (-5.3,-1) {$\bgamma$};
			
			\node at (5.4,-3) {$\ \}\ \textcolor{red}{\beta_{r2}}$};
			
			\end{tikzpicture}
			
		};
		
	\end{tikzpicture}
\end{center}
\end{example}

\subsection{The algorithm SCAN} \label{subsec: SCAN}

The algorithm SCAN \cite{GaOh92c} eliminates second-order quantifiers over predicate variables in formulae of type $\exists P_1,\ldots\exists P_n \varphi$, where $\varphi$ is an arbitrary formula of first-order predicate logic. The algorithm SCAN comprises the following three steps:

\begin{itemize}
    \item The matrix $\varphi$ of input formula $\alpha$ is transformed into skolemized clause form. This means that after the first step the input formula has the form
    \[\exists P_1\ldots\exists P_n \exists f_1\ldots \exists f_m \varphi',\]
    where the $f_i$s are the Skolem functions and $\varphi'$ is a set of clauses.
    \item The resolution and factorization rules involving the predicate variables $P_1,\ldots,P_n$  must be applied until no new clauses can be generated. the resolution and factorization rules are
    \begin{center}
        \begin{tabular}{c c}
\AXC{$P(s_1,\ldots,s_n) \vee C$}
\AXC{$\neg P(t_1,\ldots,t_n) \vee D$}
\BIC{$C \vee D \vee s_1 \neq t_1 \vee \cdots \vee s_n\neq t_n$}
\DP
&
\AXC{$P(s_1,\ldots,s_n) \vee P(t_1,\ldots,t_n) \vee C$}
\UIC{$P(s_1,\ldots,s_n) \vee C \vee s_1 \neq t_1 \cdots \vee s_n\neq t_n$}
\DP
\end{tabular}
    \end{center}
    respectively. We require that no self-resolution is possible (the two clauses involved in a resolution step must be different). As soon as all the possible clauses involving a literal $P$ are generated, delete all the clauses containing $P$. At this step, the algorithm might fail to terminate, in case there are `vicious loops' between some of the clauses (cf.~Example \ref{exe:SCAN_loops}).
    \item Suppose the last step terminates. If there are no clauses left, the input formula is a tautology, while if an empty clause is generated, the input formula is a contradiction. In all the other cases, to obtain a legitimate first-order formula we need to reverse the skolemization. Call the resulting formula SCAN($\alpha$). It is not always possible to unskolemize the output, meaning that in this last step the algorithm could fail to produce a first-order formula equivalent to the input formula.
\end{itemize}

The algorithm is not guaranteed to terminate, but when it does, its output is a first-order formula equivalent to its second-order input formula.

\begin{theorem}\label{th:SCAN_sound}
    If SCAN terminates for a formula $\alpha$ then $\alpha$ is equivalent to SCAN($\alpha$) \cite[Theorem 2.2]{GaOh92c}
\end{theorem}

We conclude this subsection with a useful example of how the algorithm SCAN could be used to compute the first-order correspondent of a modal formula. First Recall the definition of standard translation.

\begin{definition} \label{def:standard_translation}
Given a modal formula $\varphi$ and a variable $x$, the standard translation $\mathrm{ST}_x(\varphi)$ of $\varphi$ is defined as follows.
\[\mathrm{ST}_x(\neg \varphi) = \neg \mathrm{ST}_x(\varphi)\]
\[\mathrm{ST}_x(\varphi \wedge \psi) = \mathrm{ST}_x(\varphi) \wedge \mathrm{ST}_x(\psi) \quad \mathrm{ST}_x(\varphi \vee \psi) = \mathrm{ST}_x(\varphi) \vee \mathrm{ST}_x(\psi)\]
\[\mathrm{ST}_x(\wdia \varphi) = \exists y (xRy \wedge \mathrm{ST}_y(\varphi)) \quad \mathrm{ST}_x(\wbox \varphi) = \forall y (xRy \rightarrow \mathrm{ST}_y(\varphi))\]

\end{definition}
Now we show how SCAN computes the first-order correspondent of $\wdia \wdia P \rightarrow \wdia P$.
\begin{example} \label{exe:SCAN}
    The formula $\wdia \wdia p \rightarrow \wdia p$ is equivalent to $\forall P \forall x \mathrm{ST}_x(\wdia \wdia P \rightarrow \wdia P)$. To use SCAN, we consider the negated $\exists P \neg \forall x \mathrm{ST}_x(\wdia \wdia p \rightarrow \wdia p)$, to be negated back again once SCAN is finished.
    \[\neg \forall x \mathrm{ST}_x(\wdia\wdia p \rightarrow\wdia p) \equiv \exists x\exists y \exists z (xRy \wedge yRz \wedge P(z)) \wedge \forall w (\neg xRw \vee \neg P(w))\]
    The set of clauses corresponding to the expression above is
    \[\{xRy\},\{yRz\},\{P(z)\},\{\neg xRw, \neg P(w)\},\]
    where $x$,$y$, and $z$ are Skolem constants and $w$ is the only universally quantified variable. The only possible resolution step generates the clause $\{\neg xRw, z \neq w\}$, and after the clauses containing $P$ are deleted, we obtain the formula
    \[\exists x\exists y\exists z(xRy \wedge yRz \wedge \forall w(\neg xRw \vee z\neq w)).\]
    Once negated and simplified, the formula above is $\forall x \forall y \forall z (xRy \wedge yRz \rightarrow xRz)$, as expected.
\end{example}
\section{The algorithm \Algname}
\label{sec:algorithm_MASSA}

In this section, we describe the algorithm MASSA. The steps (i)-(iv) generate the analytic labelled rule $r$ associated with the input formula $\varphi$. Step (v) describes how to read off the geometric implication   from the rule $r$. It is advised to read this section in parallel with Section \ref{sec: examples_MASSA}, where some useful examples are illustrated.

\begin{notation} \label{not:MASSA_definite}
To work properly, the algorithm \Algname must have as input formulas that do not contain $\wedge$ connectives (resp.~$\vee$) with positive polarity (resp.~negative) in its Skeleton. Accordingly, from now on we will assume without loss of generality that any formula given as input to \Algname has been properly preprocessed as explained in Lemma \ref{lemma: reduction to definite}.
\end{notation}

We will now proceed to describe the algorithm in detail.

\begin{enumerate}

\item \textbf{Decomposition of the identity sequent.} \ For any modal formula $\varphi$, consider the identity end-sequent \,$x : \rvarphi \vd x : \bvarphi$\, where the formula in precedent position is coloured red and the formula in succedent position is coloured blue.  
Let $\pi_\varphi$ be a derivation of \,$x : \rvarphi \vd x : \bvarphi$\, obtained by applying the procedure described in the proof of Lemma \ref{lemma: phi implies phi} exhaustively, until there are no logical connectives in every initial sequent. Every new label introduced proceeding bottom-up must be fresh in the entire proof (and not just in every branch).\footnote{The latter requirement guarantees that all the relevant information contained in the end-sequent is maintained (and exploited in rule form) in  $\pi_\varphi$.} At each rule application in $\pi_\varphi$, propagate the colour of the principal formula to the auxiliary formulas. To make the derivation visually less cluttered, we apply the tonicity rules in their noninvertible form (cf.~Section \ref{sec:LabelledRelationalSequentCalculi}).

\item \textbf{Atomic cuts + PIA parts.} \ Consider the leaves of $\pi_\varphi$ and perform all possible cuts on atomic red-coloured formulas $x: \rp$ occurring in $\pi_\varphi$. These cuts generate new axioms of the form\, $\Gamma, y= z, y : \bp \vd z : \bp, \Delta$\, in which the new relational atom $y = z$ appears in  the conclusion of each cut with cut formulas $y : \rp$ and $z : \rp$.  If a proposition variable $x: \rp$ occurs only positively or only negatively in $\varphi$,  then cut either with an atomic initial rule of the form $ x: \abot \vd x: \rp$ or with $x: \rp \vd x: \top$.  Collect all the conclusions of these cut-applications, and use them as leaves in a (cut-free) forward-chaining proof-search with goal $\vd x : \bvarphi$. Stop the proof search only when you reach a sequent where all its formula are some of the maximal PIA subformulas\footnote{Notice that he maximal PIA subformulas here coincide with those subformulas that can be constructed using only tonicity rules.} of $\varphi$, and possibly formulas of the type $y : \bot$ or $y : \top$. Collect all the attempts $\pi^{\,i}_\varphi$ generated in this way.

\item \textbf{Skeleton part.} \ Perform a backward-chaining proof search on $\vd x : \bvarphi$ in which we decompose all and only the Skeleton connectives of $\varphi$.\footnote{Here we are compositionally destroying all the Skeleton connectives namely $\wdia$ and $\aand$ if occurring in precedent position, and $\wbox, \aor$ and $\ararr$ if occurring in succedent position. Decomposing the Skeleton of $\varphi$ is equivalent to apply only translation rules proceeding bottom-up.}
Notice that when the input formula is definite (cf.~Notation \ref{not:MASSA_definite}), we only need unary rules to decompose the Skeleton of $\varphi$.

\item \textbf{Skeleton-PIA merging.} A \emph{merging point} is a tuple of sequents $(S_1, ..., S_n, S)$, of which the $S_i$ are the endsequents of all the proof-trees  $\pi^{\, i}_\varphi$ generated in item (ii) and $S$ is the uppermost sequent of   the proof-section generated in item (iii). We call the $S_i$ the {\em premises} and $S$ the {\em conclusion} of the merging point. If $(S_1, ..., S_n, S)$ is a merging point, then it is an application of the rule $r$ in output, which provides the missing step in the derivation of $\vd x : \bvarphi$.

Let $R_i$ and $R$ be the relational parts of $S_i$ and $S$ respectively. The  rule $r$ associated with the merging point is:
\begin{center}
\AXC{$R_1, \Gamma_1 \vd \Delta_1$}
\AXC{...}
\AXC{$R_n, \Gamma_n \vd \Delta_n$}
\LL{$r $}
\TIC{$R, \Gamma_1, \ldots ,\Gamma_n \vd \Delta_1, \ldots, \Delta_n$}
\DP
\end{center}

\item \textbf{Reading off the geometric axiom from the rule.} 
Let $F_i$ be defined as the conjunction of the relational atoms in $R_i$ in case  in $S_i$ there are no occurrences of $y : \bot$ in precedent position or of $y : \top$ in succedent position (an empty conjunction will be regarded as $\top$). Otherwise, let $F_i$ be $\bot$. If in $S$ there are formulas $y : \bot$ (resp. $y : \top$) in precedent (resp.~succedent) position, then the required geometric formula is $\top$. Otherwise, the  geometric axiom which we can read off from the rule $r$ is:
\begin{center}
$\forall \ol{x} [\bigwedge R(\ol{x}) \ararr \bigvee_i \ol{\exists y_i}F_i(\ol{x}, \ol{y_i}))].$
\end{center}

\end{enumerate}


 Steps (i) and (ii) can be intuitively justified as follows. Whenever  $\varphi$ is a theorem of $K$, the calculus G3K derives $\vd x : \varphi$ without any additional rule. Otherwise, we need to identify some assumptions $\Gamma$ which allow us to derive $\Gamma \vd x : \varphi$.  Clearly, the \emph{minimal} set of assumptions $\Gamma$ under which  $\varphi$ is derivable  is $\Gamma = \{x : \rvarphi\}$. Then, at step (i), we equivalently transform the additional assumption $x : \rvarphi$ into pure relational information and also information stored in the atomic propositions of the form $x:\rp$. The cuts performed in step (ii) extract additional pure relational information from these atomic propositions. 

\section{Examples} 
\label{sec: examples_MASSA}

In the present section, we illustrate  the algorithm \Algname by running it on some definite analytic inductive  formulas. 
Let us start with $\wdia \wbox A \rightarrow \wbox \wdia A$, the Church-Rosser or directedness axiom (cf.~Example \ref{examples} (iii)). 

\textbf{Step (i).} We build the proof $\pi_\varphi$:

\begin{center}
{\footnotesize
\AXC{$ \ $}
\RL{Id$_{t:A}$}
\UIC{$(7.1) \ xRy, yRt, t : \bAu \vd t : \rAt$}
\LL{$\wbox_L$}
\UIC{$(6.1) \ xRy, yRt, y : \wbox \bAu \vd t : \rAt$}
\RL{$\wbox_R$}
\UIC{$(5.1) \ xRy, y : \wbox \bAu \vd y : \wbox \rAt$}
\RL{$\wdia_R$}
\UIC{$(4.1) \ xRy, y : \wbox \bAu \vd x : \wdia \wbox \rAt$}
\LL{$\wdia_L$}
\UIC{$(3.1) \ x : \wdia \wbox \bAu \vd x : \wdia \wbox \rAt$}
\AXC{$ \ $}
\RL{Id$_{w:A}$}
\UIC{$(7.2) \ xRz, zRw, w : \rAq \vd w : \bAd$}
\RL{$\wdia_R$}
\UIC{$(6.2) \ xRz, zRw, w : \rAq \vd z : \wdia \bAd$}
\LL{$\wdia_L$}
\UIC{$(5.2) \ xRz, z : \wdia \rAq \vd z : \wdia \bAd$}
\LL{$\wbox_L$}
\UIC{$(4.2) \ xRz, x : \wbox \wdia \rAq \vd z : \wdia \bAd$}
\RL{$\wbox_R$}
\UIC{$(3.2) \ x : \wbox \wdia \rAq \vd x : \wbox \wdia \bAd$}
\LL{$\ararr_L$}
\BIC{$(2) \ x : \wdia \wbox \bAu, x : \wdia \wbox \rAt \ararr \wbox \wdia \rAq \vd x : \wbox \wdia \bAd$}
\RL{$\ararr_R$}
\UIC{$(1) \ x : \wdia \wbox \rAt \ararr \wbox \wdia \rAq \vd x : \wdia \wbox \bAu \ararr \wbox \wdia \bAd$}
\DP
}
\end{center}

\textbf{Step (ii).} 
We consider the leaves $(7.1)$ and $(7.2)$ and  perform all the atomic cuts on red coloured formulas. 

\begin{center}
{\footnotesize
\AXC{ \ }
\RL{Id$_{t:A}$}
\UIC{$(7.1) \ xRy, yRt, t : \bAu \vd t : \rAt$}
\AXC{ \ }
\RL{Id$_{w:A}$}
\UIC{$(7.2) \ xRz, zRw, w : \rAq \vd w : \bAd$}
\RL{Cut($\rAt, \rAq$)}
\BIC{$xRy, yRt, xRz, zRw, t = w; t : \bAu \vd w : \bAd$}
\DP
}
\end{center}

We now construct the upper portion of the proof $\pi^1_\varphi$.\footnote{Notice that we could also construct a proof with a different order of rule applications (e.g.~in this case, proceeding top down, first we apply $\wbox_L$ and then $\wdia_R$). Such trivial permutations of rules generate, strictly speaking, different syntactic proofs but do not change the merging point. So, it is enough to pick one of those proofs.} In this step, we  build up the PIA sub-formulas of $\varphi$.

\begin{center}
{\small
\AXC{$\pi_\varphi^1$}
\UIC{$xRy, yRt, xRz, zRw, t = w, t : \bA \vd w : \bA$}
\RL{$\wdia_R$}
\UIC{$xRy, yRt, xRz, zRw, t = w, t : \bA \vd z : \wdia \bA$}
\LL{$\wbox_L$}
\UIC{$xRy, yRt, xRz, zRw, t = w, y : \wbox \bA \vd z : \wdia \bA$}
\dashedLine
\UIC{$ \ $}
\DP
}
\end{center}

\textbf{Step (iii).} In this step, we  work on the Skeleton  of $\varphi$.

\begin{center}
\AXC{}
\dashedLine
\UIC{$xRz, xRy \,, y : \wbox \bA \vd z : \wdia \bA$}
\LL{$\wdia_L$}
\UIC{$xRz \,, x : \wdia \wbox \bA \vd z : \wdia \bA$}
\RL{$\wbox_R$}
\UIC{$x : \wdia \wbox \bA \vd x : \wbox \wdia \bA$}
\RL{$\ararr_R$}
\UIC{$\vd x : \wdia \wbox \bA \ararr \wbox \wdia \bA$}
\DP
\end{center}

\vspace{0.2cm}

\textbf{Step (iv).} We now reach a merging point, and hence generate the  rule Dir:

\begin{center}
\begin{tabular}{c}
\AXC{}
\UIC{$xRy, yRt, xRz, zRw, t = w, t : \bA \vd w : \bA$}
\RL{$\wdia_R$}
\UIC{$xRy, yRt, xRz, zRw, t = w, t : \bA \vd z : \wdia \bA$}
\LL{$\wbox_L$}
\UIC{$xRy, yRt, xRz, zRw, t = w, y : \wbox \bA \vd z : \wdia \bA$}
\dashedLine
\LL{Dir}
\UIC{$xRy, xRz \,, y : \wbox \bA \vd z : \wdia \bA$}
\LL{$\wdia_L$}
\UIC{$xRz \,, x : \wdia \wbox \bA \vd z : \wdia \bA$}
\RL{$\wbox_R$}
\UIC{$x : \wdia \wbox \bA \vd x : \wbox \wdia \bA$}
\RL{$\ararr_R$}
\UIC{$\vd x : \wdia \wbox \bA \ararr \wbox \wdia \bA$}
\DP
\end{tabular}
\end{center}

\textbf{Step (v).} Finally, the FO-correspondent reads 
\begin{center}
    $\forall x \forall y \forall z [xRy \wedge xRz \ararr \exists t \exists w (yRt \wedge zRw \wedge t = w)],$
\end{center}
which is equivalent to directedness. 

For the next example, let us execute MASSA on the `functionality' axiom (cf.~Example \ref{examples} (i)). The pruned proof-tree generated in  the first step is the following:

\begin{center}
\AXC{}
\LL{Id$_{y:A}$}
\UIC{$xRy, y: \bA \vd y: \rA$}
\LL{$\wdia_R$}
\UIC{$xRy, y: \bA \vd x: \wdia \rA$}
\LL{$\wdia_L$}
\UIC{$x: \wdia \bA \vd x: \wdia \rA$}

\AXC{}
\RL{Id$_{z:A}$}
\UIC{$xRz, z: \rA \vd z: \bA$}
\RL{$\wbox_L$}
\UIC{$xRz, x: \wbox \rA \vd z: \bA$}
\RL{$\wbox_R$}
\UIC{$x: \wbox \rA \vd x: \wbox \bA$}

\RL{$\ararr_L$}
\BIC{$ x : \wdia \rA \ararr \wbox \rA, x: \wdia \bA \vd x: \wbox \bA$}
\RL{$\ararr_R$}
\UIC{$ x : \wdia \rA \ararr \wbox \rA \vd x: \wdia \bA \ararr \wbox \bA$}
\DP
\end{center}

The leaves on which we perform the only possible cut are written below: 
\begin{center}
$xRy, y: \bA \vd y: \rA \quad xRz, z: \rA \vd z: \bA.$
\end{center}
After performing step (ii) and (iii), the merging point is reached, which generates the following derivation and rule (step (iv)):


\begin{center}
\AXC{$ \ $}
\UIC{$y = z, xRy, xRz, y: \bA \vd z: \bA$}
\dashedLine
\LL{Fun}
\UIC{$xRy, xRz, y: \bA \vd z: \bA$}
\LL{$\wdia_L$}
\UIC{$xRz, x: \wdia \bA \vd z: \bA$}
\RL{$\wbox_R$}
\UIC{$x: \wdia \bA \vd x: \wbox \bA$}
\RL{$\ararr_R$}
\UIC{$\vd x: \wdia \bA \ararr \wbox \bA$}
\DP
\end{center}

from which the  first-order correspondent (step (v)) below can be read off:

\begin{center}
$\forall x \forall y \forall z (xRy \land xRz \ararr y = z)$.
\end{center}

Merging points do not need to be unary. To see this, let us consider the formula $\wbox (\wbox A \ararr B) \lor \wbox (\wbox B \ararr A)$ (cf.~Example \ref{examples} (v)). After performing step (i), the leaves of $\pi$ are as follows:
\begin{center}
$xRy, yRz, z: \bA \vd z: \rA \quad\quad xRt, t: \rA \vd t : \bA$ $xRt, tRw, w: \bB \vd w : \rB \quad\quad xRy, y: \rB \vd y: \bB$
\end{center}

After performing steps (ii) and (iii), we generate a binary merging point and we provide the following derivation (step (iv)), obtaining the rule $3$: 

\begin{center}
{\small
\AXC{$ \ $}
\UIC{$xRy, yRz, xRt, z = t, z: A \vd t: A$}
\UIC{$xRy, yRz, xRt, z = t, y: \wbox A \vd t: A$}

\AXC{$ \ $}
\UIC{$xRt, tRw, xRy, y = w, w: B \vd y: B$}
\UIC{$xRt, tRw, xRy, y = w, t: \wbox B \vd y: B$}

\dashedLine
\LL{$3$}
\BIC{$xRy, xRt, y: \wbox A, t: \wbox B \vd y: B, t: A$}
\UIC{$xRy, xRt, y: \wbox A \vd y: B, t: \wbox B \ararr A$}
\UIC{$xRy, xRt \vd y: \wbox A \ararr B, t: \wbox B \ararr A$}
\UIC{$xRy \vd y: \wbox A \ararr B, x: \wbox (\wbox B \ararr A)$}
\UIC{$\vd x: \wbox (\wbox A \ararr B), x: \wbox (\wbox B \ararr A)$}
\UIC{$\vd x: \wbox (\wbox A \ararr B) \lor \wbox (\wbox B \ararr A)$}
\DP
}
\end{center}

The first order correspondent (step (v)) reads
\begin{center}
$\forall x \forall y \forall t (xRy \land xRt \ararr \exists z (yRz \land z = t) \lor \exists w (tRw \land y = w))$
\end{center}
which is equivalent to
\begin{center}
$\forall x \forall y \forall t (xRy \land xRt \ararr yRt \lor tRy).$
\end{center}

The examples discussed so far are all Sahlqvist. However, MASSA is successful on  (definite analytic) formulas which are {\em properly inductive}, such as the axiom $K := \wbox (A \ararr B) \ararr (\wbox A \ararr \wbox B)$. After performing step (i), the leaves of $\pi_K$ are as follows:
\begin{center}
$xRz, z: \bA \vd z: \rA \quad\quad  xRy, y: \rA \vd y: \bA$ $xRy, y: \bB \vd y: \rB \quad\quad  xRt, t: \rB \vd t: \bB$
\end{center}

After performing steps (ii) and (iii), we reach a merging point and hence the rule deriving $K$ as follows (step (iv)):

\begin{center}
{\small
\AXC{$ \ $}
\UIC{$xRz, zRy, y = z, z: A \vd y: A$}
\UIC{$xRz, zRy, y = z, x: \wbox A \vd y: A$}

\AXC{$ \ $}
\UIC{$xRy, xRt, t = y, y: B \vd t: B$}

\BIC{$xRt, xRy, xRz, t = y, t = z, y: A \ararr B, x: \wbox A \vd t: B$}
\UIC{$xRt, xRy, xRz, t = y, t = z, x: \wbox (A \ararr B), x: \wbox A \vd t: B$}
\dashedLine
\LL{$K$}
\UIC{$xRt, x: \wbox (A \ararr B), x: \wbox A \vd t: B$}
\UIC{$x: \wbox (A \ararr B), x: \wbox A \vd x: \wbox B$}
\UIC{$x: \wbox (A \ararr B) \vd x: \wbox A \ararr \wbox B$}
\UIC{$\vd x: \wbox (A \ararr B) \ararr (\wbox A \ararr \wbox B)$}
\DP
}
\end{center}

The first-order correspondent (step (v)) reads
\begin{center}
    $\forall x \forall t (xRt \ararr \exists y \exists z (xRy \wedge xRz \wedge t = y \wedge t = z))$
\end{center}
which is equivalent to $\aatop$ as expected, since the input formula $K$ is derivable in G3K, i.e.~is valid in every Kripke frame.

The next example investigates the case where there is a uniform propositional atom. Consider the formula $\wbox (A \vee B)\rightarrow \wdia A$. After the first phase, the mismatched leaf $xRy, y: \bB \vd y: \rB$ becomes $xRy, y: B \vd y: \top$ at the beginning of the second phase.

\begin{center}
{\small
\AXC{$ \ $}
\UIC{$xRy, y: B \vd y: \top$}

\AXC{$ \ $}
\UIC{$xRy, xRz, y = z, y: A \vd z: A$}
\UIC{$xRy, xRz, y = z, y: A \vd x: \wdia A$}

\BIC{$xRy, xRz,y = z, y: (A \vee B) \vd x: \wdia A, y: \top$}
\UIC{$xRy, xRz,y = z, x: \wbox (A \vee B) \vd x: \wdia A, y: \top$}
\dashedLine
\UIC{$ x: \wbox (A \vee B) \vd x: \wdia A$}
\UIC{$\vd x: \wbox (A \vee B)\rightarrow \wdia A$}
\DP
}
\end{center}

Since $y:\top$ appears in the succedent of the sequent over the merging point, the first-order correspondent of the formula is $\bot$.

As a last example, consider now the formula $ \wdia A \wedge A \rightarrow \wbox A \vee \wbox \wdia A$. At the end of the first step, we obtain the four leaves

\begin{center}
\begin{tabular}{rcl}
$xRy, y: \bA \vd y: \rA$ & & $xRw, w: \rA \vd w: \bA$ \\ \\
$x: \bA \vd x: \rA$ & & $xRz, zRt, t: \rA \vd t: \bA$
\end{tabular}
\end{center}

We need to perform all the possible cuts between the four leaves above for a total of four cuts. In the end we obtain the equivalent rule $R$

\begin{center}
{\small
\AXC{$\pi_1$}
\AXC{$\pi_2$}
\AXC{$\pi_3$}
\AXC{$\pi_4$}
\dashedLine
\LL{$R$}
\QIC{$xRy, xRw, xRz, y: A, x: A \vd w: A, z:  \wdia A$}
\UIC{$xRy, xRw, y: A, x: A \vd w: A, x:\wbox \wdia A$}
\UIC{$xRy, y: A, x: A \vd x: \wbox A, x:\wbox \wdia A$}
\UIC{$ x: \wdia A, x: A \vd x: \wbox A, x:\wbox \wdia A$}
\UIC{$ x: \wdia A, x: A \vd x: \wbox A \vee \wbox \wdia A$}
\UIC{$ x: \wdia A \wedge A \vd x: \wbox A \vee \wbox \wdia A$}
\UIC{$ \vd x: \wdia A \wedge A \rightarrow \wbox A \vee \wbox \wdia A$}
\DP
}
\end{center}

where $\pi_1$, $\pi_2$, $\pi_3$, and $\pi_4$ are the leaves

\[xRy, xRw, y=w, y:A\vd w:A\]
\[xRy, xRz, zRt, y=t, y:A\vd t:A\]
\[xRw, x=w, x:A\vd w:A\]
\[xRz, zRt, x=t, x:A\vd t:A\]
respectively. The first-order correspondent is

\[\forall x \forall y \forall w \forall z [xRy \wedge xRw \wedge xRz \rightarrow y=w \vee \exists t(t = y \wedge zRt) \vee x=w \vee \exists t (t=x \wedge zRt)]\]

\section{SCAN is successful on analytic inductive axioms} \label{sec:SCAN_successful}

In this section, we prove that the algorithm SCAN (cf.~Subsection \ref{subsec: SCAN}) terminates when its input is a definite analytic inductive axiom. The proof differs substantially from the one given in \cite{Goranko:SCAN:Complete}, since we need to deal with a different `source' of non-termination (cf.~Remark \ref{rem:different_from_Goranko} below). 

\begin{notation}
From now on, we will sometimes write the inequality $\varphi \leq \psi$ to denote the formula $\varphi \rightarrow \psi$.
\end{notation}

We begin with a useful lemma that relates the maximal PIA subformulas of the definite analytic inductive axiom in input with some disjunctive clauses of the standard translation of its negation in conjunctive normal form.

\begin{lemma} \label{lem:clausification}
    Consider a definite analytic inductive inequality $\varphi \leq \psi$. After translating $\mathrm{ST}_x(\varphi) \ \&\ \neg\mathrm{ST}_x(\psi)$ in conjunctive normal form, there will exist a bijection between its clauses containing at least one second-order predicate symbol and the maximal PIA subformulas of $\varphi \leq \psi$. The bijection is such that all the instances of a propositional atom match the instances of the corresponding second-order predicate symbols, but with reversed polarity. 
\end{lemma}
\begin{proof}
Proceeding by induction on the structural complexity of $\varphi$ and $\psi$ one proves that, once Skolemized, $\mathrm{ST}_x(\varphi)\ \&\ \neg\mathrm{ST}_x(\psi)$ is equivalent to
\[\bigwith_i \mathrm{ST}_{x_i}(\alpha_i)\ \&\ \bigwith_j\neg\mathrm{ST}_{y_j}(\beta_j) \ \&\ \mathcal{R},\]
where the $\alpha_i$s (resp.~$\beta_j$s) are the maximal positive (resp.~negative) PIA subformulas of $\varphi \leq \psi$ and $\mathcal{R}$ is a conjunction of relational atoms. Notice that during this first phase, no universal quantifier appears, meaning that all the symbols introduced via Skolemization are constant symbols (cf.~Example \ref{exe:SCAN_translation_PIA} below).

Again via structural induction, it is shown that every $\mathrm{ST}_{x_i}(\alpha_i)$ (resp.~$\neg\mathrm{ST}_{y_j}(\beta_j)$) is equivalent to a universally quantified disjunction containing exactly the same atomic propositions of $\alpha_i$ (resp.~$\beta_j$) translated as second-order predicate symbols (possibly together with first-order literals), meaning that it gets translated to a clause, as we wanted to prove. Notice that during this last phase, no existential quantifier is introduced. The last claim follows from the observation that $\mathrm{ST}_x(\varphi) \ \&\ \neg\mathrm{ST}_x(\psi)$ is just the translation of $\varphi \nleq \psi$, hence the polarity of all the atomic proposition instances are reversed.
\end{proof}

We give a concrete example to better visualize the proof of the preceding lemma.

\begin{example} \label{exe:SCAN_translation_PIA}
Consider the definite analytic inductive (but not sahlqvist) formula 
\[\wdia(B \wedge \wbox A) \rightarrow \wdia A \vee \wbox \wdia(\wdia B \wedge A)\]
whose maximal PIA subformulas are $B$ (the leftmost instance), $\wbox A$, $\wdia A$, and $\wdia (\wdia B \wedge A)$. In this example we do not distinguish between lattice connectives and meta-conjunctions/disjunctions. As described in the proof above, we start computing the first-order translation $\mathrm{ST}_x(\wdia (B \wedge \wbox A)) \wedge \neg\mathrm{ST}_x(\wdia A \vee \wbox\wdia(\wdia B \vee A))$:
\[\mathrm{ST}_x(\wdia (B \wedge \wbox A)) = \exists y (xRy \wedge \mathrm{ST}_y(B)\wedge \mathrm{ST}_y(\wbox A)) \]
\[\neg\mathrm{ST}_x(\wdia A \vee \wbox\wdia(\wdia B \vee A)) = \neg\mathrm{ST}_x(\wdia A) \wedge \exists x (xRz \wedge \neg\mathrm{ST}_z(\wdia(\wdia B \wedge A)))\]
After Skolemization, we get
\[\mathrm{ST}_y(B) \wedge \mathrm{ST}_y(\wbox A) \wedge \neg \mathrm{ST}_x(\wdia A) \wedge \neg\mathrm{ST}_z(\wdia(\wdia B \wedge A)) \wedge xRy\wedge xRz\]
as expected. The intuition behind the result is that when an analytic inductive inequality is definite, its Skeleton contains only conjunctions (resp.~disjunctions) and diamond-like (resp.~box-like) connectives in the antecedent (resp.~succedent). We now translate and clausify every maximal PIA subformula.
\[\mathrm{ST}_y(B) = B(y) \quad \mathrm{ST}_y(\wbox A) = \forall w(\neg yRw \vee A(w))\]
\[\neg \mathrm{ST}_x(\wdia A) = \forall w'(\neg xRw' \vee \neg A(w'))\]
\[\neg\mathrm{ST}_z(\wdia(\wdia B \wedge A))= \forall t_1 \forall t_2 (\neg zRt_1 \vee \neg t_1Rt_2 \vee \neg A(t_1) \vee \neg B(t_2))\]
The intuition behind this last step is that when an analytic inductive inequality is definite, its PIA contains only conjunctions (resp.~disjunctions) and diamond-like (resp.~box-like) connectives when they have positive (resp.~negative) polarity.
\end{example}

After the previous example and a further comparison with Example \ref{exe:SCAN}, we elaborate on a couple of observations that will be useful later on, when proving the soundness of MASSA (cf.~Proposition \ref{prop: MASSA_soundness_via_SCAN}).

\begin{remark} \label{rem:SCAN_MASSA_comparison}
    Once translated in clause normal form, every negated definite analytic inductive formula  $\mathrm{ST}_x(\varphi)\ \&\ \neg\mathrm{ST}_x(\psi)$ containing $n$ Skeleton connectives and $m$ maximal PIAs will be of the form
    \[\{x_1Ry_1\},\ldots,\{x_nRy_n\},\]
    \[\{\neg z_{1,1}Rw_{1,1},\ldots,\neg z_{1,k_1}Rw_{1,k_1},*P_{1,1}(t_{1,1}),\ldots,*P_{1,k'_1}(t_{1,k'_1})\},\]
    \[\vdots\]
    \[\{\neg z_{m,1}Rw_{m,1},\ldots,\neg z_{m,k_m}Rw_{m,k_m},*P_{m,1}(t_{m,1}),\ldots,*P_{m,k'_m}(t_{m,k'_m})\}.\]
    Where the first $n$ clauses come from the translation of the Skeleton portion of the input formula and all the variables $x_i$s and $y_i$s are existentially quantified. The last $m$ clauses are the translation of the maximal PIAs, where $*P$ could be either $P$ or $\neg P$, and all their variables that are not already in $x_1,y_1,\ldots,x_n,y_n$ are universally quantified.
\end{remark}

With the next corollary, we prove that the success of SCAN on analytic inductive inequalities reduces to checking the absence of infinite loops.

\begin{corollary} \label{cor:reverse_skolem}
    When computing the conjunctive normal form of $\mathrm{ST}_x(\varphi) \ \&\ \neg\mathrm{ST}_x(\psi)$ corresponding to the negation of the definite analytic inductive inequality $\varphi \leq \psi$, the Skolemization adds only constant symbols, therefore, to prove that SCAN is successful on analytic inductive axioms it is sufficient to prove that it does not loop, the un-Skolemization step being always successful.
\end{corollary}
\begin{proof}
    From the proof of Lemma \ref{lem:clausification} it is clear that, during the translation, no existential quantifier is introduced under the scope of a universal quantifier. This property follows from the fact that in an analytic inductive inequalities all branches are good.
\end{proof}

Before stating our main result, we show an example where SCAN's resolution phase loops forever.

\begin{example} \label{exe:SCAN_loops}
    Consider the clauses $\mathcal{C}_1 \coloneqq \{A(x), B(y)\}$ and $\mathcal{C}_2\coloneqq\{\neg A(z), \neg B(w)\}$, we show three consecutive resolution steps:

\begin{center}
\begin{tikzpicture}
  \begin{scope}[rotate=180]
    \node {$\{z\neq x,z\neq w,B(y),\neg B(w)\}$}
      [level distance=2.5cm, sibling distance=3.5cm,
       every node/.style={ align=center}]
      child {node {$\{z\neq x,y\neq w,B(y), A(x)\}$}
        child {node {$\{z\neq x,B(y),\neg B(w)\}$}
          child {node {$\mathcal{C}_1$}}
          child {node {$\mathcal{C}_2$}}
        }
        child {node {$\mathcal{C}_1$}}
      }
      child {node {$\mathcal{C}_2$}};
  \end{scope}
\end{tikzpicture}
\end{center}

It is clear that in this example resolution does not terminate.
    
\end{example}

We now have everything we need to prove the main proposition of this section.

\begin{proposition} \label{prop: SCAN_terminates}
The algorithm SCAN is always successful on definite analytic inductive axioms.
\end{proposition}
\begin{proof}
Consider an arbitrary definite analytic inductive inequality $\varphi \leq \psi$ with propositional atoms $p_1, \ldots, p_n$. To make the proof easier to read, assume without loss of generality that $\Omega$ is $p_1 > p_2 >\ldots> p_n$ and we further assume that $\varepsilon(p_i)=1$ for all $i \in\{1,\ldots,n\}$. The proof can be easily adapted to different order types switching some of the negations in the remainder of the proof below.

By virtue of Corollary \ref{cor:reverse_skolem}, to prove the thesis is sufficient to argue that SCAN does not loop forever during its resolution phase, the unskolemization being unproblematic. Since every clause of the clausification of $\mathrm{ST}_x(\varphi)\ \&\ \neg\mathrm{ST}_x(\psi)$ comes from a maximal PIA subformula of $\varphi \leq \psi$ (cf.~Lemma \ref{lem:clausification}), every clause will be exactly of one of the following types:
\begin{itemize}
    \item $\{p_i(x)\}\cup \bigcup^{n}_{j=i+1}\{\neg p_{j}(x_{j,1}),\ldots,\neg p_j(x_{j,m_j})\}\cup\mathcal{R}$, with $i$ in $\{1,\ldots,n\}$, exactly one instance of $p_i$, zero or more instances of $\neg p_j$ with $i<j\leq n$, and $\mathcal{R}$ a set with only relational literals,
    \item $\bigcup^{n}_{j=1}\{\neg p_j(y_{j,1}),\ldots,\neg p_j(y_{j,m_j})\} \cup\mathcal{R}$, with each $\neg p_j$ of arbitrary multiplicity $m_j$ and $\mathcal{R}$ a set with only relational literals.
\end{itemize}
Clauses from the first type come from maximal PIA subformulas containing a critical occurrence of a propositional variable, while maximal PIA subformulas with no critical occurrences correspond to the second type of clauses. To prove the termination of the resolution step, we will preliminarily assign to every clause an element in the well-ordered set $2^n\times\omega^n$, where the product order is taken lexicographically. The assignment is defined as follows\footnote{Using a slight abuse of notation, we write $k^m$ to denote the $m$-uple $(k,\ldots,k)$, with $k$ we denote the single-element tuple $(k)$, and finally with $(a_1,\ldots,a_n) \times (b_1,\ldots,b_m)$ we denote the tuple $(a_1,\ldots,a_n,b_1,\ldots,b_m)$.}:
\begin{itemize}
    \item the clause $\{p_i(x)\}\cup \bigcup^{n}_{j=i+1}\{\neg p_{j}(x_{j,1}),\ldots,\neg p_j(x_{j,m_j})\}\cup\mathcal{R}$ gets assigned to the element $0^{i-1}\times1\times0^{n-i}\times(0,\ldots,0,m_{i+1},\ldots,m_n)$, where $m_j$ is the multiplicity of $\neg p_j$;
    \item the clause $\bigcup^{n}_{j=1}\{\neg p_j(y_{j,1}),\ldots,\neg p_j(y_{j,m_j})\} \cup\mathcal{R}$ gets assigned to $0^n\times(m_1,\ldots,m_n)$, where $m_j$ is the multiplicity of $\neg p_j$.
\end{itemize}
From now on, the image of a clause under the assignment described above is called its \emph{size}. Our strategy will be to show that only a finite number of new clauses can be created using resolution steps. From the definition of the assignment, we deduce that the size of the output of a resolution step is strictly smaller than the size of at least one of its inputs. To prove our thesis, it is sufficient to argue that the $(n+i)$th coordinate of the size of every new clause has an upper bound, denoted as $\max p_i$. Define $A_{i,j}$ (with $i$ in $\{1,\ldots,n\}$ and $j>i$) as the highest multiplicity of $\neg p_j$ among all clauses of the form $\{p_i(x)\}\cup \bigcup^{n}_{j=i+1}\{\neg p_{j}(x_{j,1}),\ldots,\neg p_j(x_{j,m_
j})\}\cup\mathcal{R}$, and define $B_j$ (with $j$ in $\{1,\ldots,n\}$) as the highest multiplicity of $\neg p_j$ among all clauses of the form $\bigcup^{n}_{j=1}\{\neg p_j(y_{j,1}),\ldots,\neg p_j(y_{j,m_j})\} \cup\mathcal{R}$.
We claim $\max p_1 = B_1$ and $\max p_i \leq \sum^{i-1}_{j=1}(A_{j,i}\max p_j) + B_i$ for $i>1$. To see why $\max p_1 = B_1$, notice that if $\mathcal{C}_1$ and $\mathcal{C}_2$ are the two premise clauses in a resolution step, then one of them must have zero instances of $\neg p_1$. As for the $i>1$ case, suppose we have a correct estimate of $\max p_j$ for $j < i$. To compute an upper bound for $\max p_i$, assume that after a number of resolution steps we obtain a clause $\mathcal{C}$ of the form $\bigcup^{n}_{k=1}\{\neg p_k(y_{k,1}),\ldots,\neg p_k(y_{k,m_k})\} \cup\mathcal{R}$ with exactly $\max p_j$ instances of $\neg p_j$ for $j<i$ and exactly $B_i$ instances of $\neg p_i$ (see Example \ref{exe:SCAN_termination} for a concrete case where our estimated upper bound coincides with the real value of $\max p_i$). For every $j<i$, we resolve $\mathcal{C}$ with a clause $\mathcal{C}_j$ of the type $\{p_j(x)\}\cup \bigcup^{n}_{k=j+1}\{\neg p_{k}(x_{k,1}),\ldots,\neg p_k(x_{k,m_
k})\}\cup\mathcal{R}$ containing $A_{j,i}$ instances of $\neg p_i$, meaning that in every resolution step we add $A_{j,i}$ instances of $\neg p_i$ to $B_i$. We can repeat the process $\max p_j$ times for each $j$, leading to a total of $\sum^{i-1}_{j=1}(A_{j,i}\max p_j) + B_i$ instances of $\neg p_i$ in the `worst' (in the sense of `with more instances of $\neg p_i$') scenario.
\end{proof}

\begin{example} \label{exe:SCAN_termination}
Consider the following set $\{\mathcal{C}_1,\mathcal{C}_2,\mathcal{C}_3,\mathcal{C}_4\}$ of clauses, with $A_{1,2} = A_{1,3} = 2$, $A_{2,4} = 1$, $B_1 = B_2 = 2$, and $B_3 = 1$:
    \begin{itemize}
        \item $\mathcal{C}_1 \coloneqq \{p_1(x_1), \neg p_2(x_2), \neg p_2(x_3), \neg p_3(x_4), \neg p_3(x_5)\} \cup \mathcal{R}_1$;
        \item $\mathcal{C}_2 \coloneqq \{p_2(x_6), \neg p_3(x_7)\} \cup \mathcal{R}_2$;
        \item $\mathcal{C}_3 \coloneqq \{p_3(x_8)\} \cup \mathcal{R}_3$;
        \item $\mathcal{C}_4 \coloneqq \{\neg p_1(x_9), \neg p_1(x_{10}), \neg p_2(x_{11}), \neg p_2(x_{12}), \neg p_3(x_{13})\} \cup \mathcal{R}_4$.
    \end{itemize}
    It is clear that $\max p_1 = 2$ and according to Proposition \ref{prop: SCAN_terminates}, we expect $\max p_2 \leq 6$ and $\max p_3 \leq 11$. To obtain a clause with the maximum number of instances of $\neg p_2$, resolve $\mathcal{C}_1$ against $\mathcal{C}_4$ twice to obtain a clause of the type
    \[\mathcal{C}' = \{\neg p_2,\neg p_2,\neg p_2,\neg p_2,\neg p_2,\neg p_2,\neg p_3,\neg p_3,\neg p_3,\neg p_3,\neg p_3\} \cup \mathcal{R}',\]
    where we omitted the details concerning first order variables to not get distracted with unnecessary details. Finally, to obtain clause with the maximum number of instances of $\neg p_3$, we resolve $\mathcal{C}'$ against $\mathcal{C}_2$ six times to obtain a clause $\mathcal{C}''$ containing $11$ instances of $\neg p_3$.
\end{example}
 
The map from the set of clauses to the poset $2^n \times \omega^n$ employed in the  proof of Proposition \ref{prop: SCAN_terminates} will be exploited (albeit in a modified form) to prove the termination of MASSA in Proposition \ref{prop: MASSA_termination} below. We conclude with a remark on the difference between our proof and the one in \cite{Goranko:SCAN:Complete}.

\begin{remark}\label{rem:different_from_Goranko}
    The class of analytic inductive formulas and the class of Sahlqvist formulas are contained in the class of inductive formulas, but neither is contained in the other. Translations of Sahlqvist formulas can have arbitrarily complex alternation of quantifiers (e.g.~$p \rightarrow \wbox\wdia\wbox\cdots\wdia\wbox p$), while all analytic inductive formulas are morally equivalent to $\forall\exists$ formulas, since all their branches are good. On the other hand, Sahlqvist formulas only admit unary PIA connectives, meaning that every inductive order $\Omega$ is admissible (cf.~\cite{CoPa12}), and as a consequence, all the SCAN clauses from a sahlqvist formula will contain either a single positive propositional atom or only negative propositional atoms (cf.~\cite[Theorem 2]{Goranko:SCAN:Complete}). In other words, in our case, proving that the unskolemization step successfully terminates is trivial, while verifying that the resolution step ends is more laborious. In contrast, in the sahlqvist case, unskolemization step is not obvious, while it is easy to prove that the resolution step does not loop.
\end{remark}

\section{Soundness and termination of MASSA via a comparison with SCAN} \label{sec:MASSA_soundness_termination}

In this section, we prove that the algorithm MASSA is successful on all definite analytic inductive axioms. In particular, in Subsection \ref{prop: MASSA_termination} we prove that the algorithm always terminates on inductive axioms, while in Subsection \ref{subsec:MASSA_soundness} we show that when \Algname terminates, it gives the correct output.

\subsection{Termination} \label{subsec:MASSA_termination}

Before proving the successful termination of our algorithm, it is helpful to investigate cases where \Algname gets stuck or loops, to better understand the reason of its failure against formulas that are not analytic inductive. We begin observing that step (i) of the algorithm always terminates.

\begin{lemma} \label{lem:MASSA_first_step_successful}
    Given an arbitrary formula $\varphi$ as input, it is always possible to derive $x : \rvarphi \vd x : \bvarphi$ in a way that every label introduced proceeding bottom-up is fresh in the entire derivation.
\end{lemma}
\begin{proof}
    Just follow Lemma \ref{lemma: phi implies phi} and always use new variable names when applying an eigenvariable rule.
\end{proof}

The first reason why \Algname could fail comes from a bad alternation of diamond-like connectives from $\mathcal{F}$ and box-like connectives from $\mathcal{G}$: when a branch of the input formula is not \emph{good} (cf.~\ref{def:good:branch}), the algorithm could get stuck in steps (ii) or (iii). The next examples illustrate this phenomenon.

\begin{example} \label{exe:MASSA_failure_stuck}
    Let us try and run \Algname on the (non inductive and famously non elementary, see \cite{vanBenthem78}) McKinsey formula $\wbox \wdia A \ararr \wdia \wbox A$.
Step (i) produces the leaves
\begin{center}
$xRy, yRz, z:\bA \vd z:\rA \quad\quad xRw, wRt; t:\rA \vd t:\bA,$
\end{center}
but after performing the cut, at step (ii) and (iii) we get stuck:
\begin{center}
\AXC{$xRy,yRz,xRw,wRt,z=t,z:A\vd t:A$}

\UIC{???}

\UIC{$ x: \wbox \wdia A \vd x: \wdia \wbox A$}

\UIC{$\vd x: \wbox \wdia A \ararr \wdia \wbox A$}
\DP
\end{center}
We cannot proceed bottom-up since we do not have the necessary relational information, and we cannot proceed top-down without violating the side conditions of G3K. We fail to reach a merging-point from both directions. Consider now the (Sahlqvist but not analytic) formula $A \ararr \wdia \wbox A$, the first step produces the leaves
\begin{center}
$x:\bA \vd x:\rA \quad\quad xRw, wRt; t:\rA \vd t:\bA.$
\end{center}
Again, after performing the cut, we cannot proceed further:
\begin{center}
\AXC{$xRw,wRt,x=t,x: A\vd t:A$}

\UIC{???}

\UIC{$ x: A \vd x: \wdia \wbox A$}

\UIC{$\vd x: A \ararr \wdia \wbox A$}
\DP
\end{center}

The source of the problem is that in both formulas considered above there is a bad alternation of Skeleton and Pia connectives, as shown in the syntax tree of 
\[\top \leq \wbox \wdia p \rightarrow \wdia \wbox p \equiv \wbox \wdia p \leq \wdia \wbox p\] 
below, where single-circled nodes are PIA nodes and double-circled ones are Skeleton. 

\begin{center}
\begin{tikzpicture}
			\tikzstyle{level 1}=[level distance=1cm, sibling distance=2.5cm]
			\tikzstyle{level 2}=[level distance=1cm, sibling distance=1.5cm]
			\tikzstyle{level 3}=[level distance=1 cm, sibling distance=1.5cm]
			\node[PIA] at (-1.5,0) {$\begin{aligned} +\wbox \end{aligned}$}
			child{node[Ske]{$\begin{aligned} +\wdia \end{aligned}$}
				child{node{$+p$}}
			}
			;
			\node at (0,0) {$\le$}; 
			
			\node[PIA] at (1.5,0) {$\begin{aligned} -\wdia \end{aligned}$}
			child{node[Ske]{$\begin{aligned} -\wbox \end{aligned}$}
				child{node{$-p$}}
			}
			;

			\node at (-2.33,-1.5) {};
			\end{tikzpicture}
            \end{center}
\end{example}

The next lemma shows that in order not to get stuck in steps (ii) and (iii) of the algorithm, it is enough for every branch of the input inequality to be \emph{good}.

\begin{lemma} \label{lem: Skeleton_terminates}
    If $\varphi \leq \psi$ is a definite analytic-inductive inequality, the following assertions hold:
    \begin{enumerate}
        \item it is always possible to eliminate all the Skeleton connectives of $\varphi \leq \psi$ using only unary rules of the calculus and proceeding up from the sequent $x:\varphi \vd x:\psi$;
        \item it is always possible to build all the PIA subformulas  of $\varphi \leq \psi$ starting from the atomic cuts obtained at the beginning of step (ii) of the algorithm.
    \end{enumerate}
\end{lemma}
\begin{proof}
\begin{enumerate}
    \item We prove our thesis via structural induction, with a slight strengthening of our inductive hypothesis. We want to prove that in every sequent of the type
    \[\mathcal{R}, x_1:\alpha_1,\ldots,x_n:\alpha_n \vdash y_1:\beta_1,\ldots,y_m:\beta_m\]
    where $\mathcal{R}$ contains relational atoms, we can decompose the Skeleton connectives of the $\alpha_i$s and the $\beta_j$s using only unary rules of the calculus. The base case is obvious, since there are no Skeleton connectives to decompose. In the inductive case, it is enough to use one of the rules $\wedge_L$, $\wdia_L$ (if one of the $\alpha_i$s has the shape $\alpha'_1 \wedge \alpha''_i$ or $\wdia \alpha'_i$) $\vee_R$, or $\wbox_R$ (if one of the $\beta_j$s has the shape $\beta'_j \vee \beta''_j$ or $\wbox \beta'_j$). Notice that no $\alpha$ (resp.~$\beta$) has shape $\alpha' \vee \alpha''$ (resp.~$\beta' \wedge \beta''$), since we assumed the input formula to be definite.
    \item Since the subformulas to be constructed are PIA, the only rules for the modal connectives involved in their construction are $\wdia_R$ and $\wbox_L$. It is enough to show that we can always apply them when needed, i.e.~we have the appropriate relational atom $x_iRx_j$. The last statement holds because the starting leaves in step $(ii)$ contain all the relational atoms involved in the construction of the two branches from where the propositional atoms come from.
\end{enumerate}
\end{proof}

The previous lemma shows that step (iii) of \Algname terminates successfully when its input is a definite analytic inductive formula, but while it's true that we can always build all the PIA subformulas (meaning that during step (ii) we do not get stuck), we still have to rule out the possibility that the algorithm continues indefinitely without reaching a point where the sequent contains only some of the maximal PIA subformulas. The next example illustrates this possibility.

\begin{example} \label{exe:MASSA_failure_loop}
    Consider a non-inductive formula where every branch is good but there is no inductive order $\Omega$, like the formula $\wbox ( A\vee B) \rightarrow \wdia (B \wedge A)$ with maximal PIA subformulas $\wbox(A \vee B)$ and $\wdia (A \vee B)$. After the first \Algname step, the leaves obtained by performing the atomic cuts are
\[xRz, xRy, y=z, y:A \vdash z: A\quad xRz, xRy, y=z, y:B \vdash z:B\]
Below we illustrate the attempt to carry out step (ii) of the algorithm, some of the sequents are numbered for later reference. To make the derivation easier to visualize, we write $\mathcal{R}$ in place of $xRz, xRy, y=z$.

\begin{center}
{\small

\AXC{}
\UIC{($1$) $\mathcal{R}, y:A \vdash z:A$}
\AXC{}
\UIC{$\mathcal{R}, y:B \vdash y:B$}
\BIC{($2$) $\mathcal{R},\mathcal{R}, y:A\vee B \vdash z:A,z:B$}
\UIC{($3$) $\mathcal{R},\mathcal{R}, x: \wbox(A\vee B) \vdash z:A,z:B$}

\AXC{}
\UIC{$\mathcal{R},y:A \vdash z:A$}
\BIC{($4$) $\mathcal{R},\mathcal{R},\mathcal{R}, x: \wbox(A\vee B), y:A \vdash z:A,z:A\wedge B$}
\UIC{$\mathcal{R},\mathcal{R},\mathcal{R}, x: \wbox(A\vee B), y:A \vdash z:A,x: \wdia(A\wedge B)$}

\AXC{}
\UIC{$\mathcal{R}, y:B \vdash z: B$}
\BIC{}

\noLine
\UIC{$\vdots\ ???$}
\dashedLine
\UIC{$x: \wbox( A\vee B) \vdash x: \wdia (B \wedge A)$}
\UIC{$\vdash x: \wbox( A\vee B) \rightarrow \wdia (B \wedge A)$}
\DP
}
\end{center}

It is easy to realize that we are going in a loop, unable to build a sequent containing only some of the maximal PIA subformulas. Notice that we are eventually able to construct maximal PIAs, but there will always be some `leftover' formula preventing the successful termination of the algorithm. This example is somehow similar to Example \ref{exe:SCAN_loops} from the previous section.
\end{example}

Before providing the proof of the termination of the algorithm, we need some preliminary definitions. Refer to Example \ref{exe:MASSA_termination_auxiliaries_definitions} to see how the new definitions relate to the previous examples.

\begin{definition} \label{def:MASSA_termination_auxiliaries_definitions}
    Consider an arbitrary inequality $\varphi \leq \psi$ and label every instance of the same atomic proposition uniquely, we denote the labelled atomic propositions \emph{decorated atoms} and we say that $\varphi \leq \psi$ is decorated.  A \emph{couple} is a pairing between different instances of the same atomic proposition coming from a leaf after an atomic cut.
    We now inductively define a special multiset, called the \emph{residue}, associated with a sequent constructed during phase (ii) of the algorithm. For an initial sequent coming from an atomic cut, its residue is the set of all the decorated atoms of the two maximal PIAs containing the couple, not including the two atoms in the couple. From sequents obtained via $\wbox_L$ and $\wdia_R$, their residue is the same as the residue of the premise of the rule. If the sequent is obtained via an application of the rule $\wedge_R$ as shown below,
    \begin{center}
        \AXC{$\Gamma_1 \vdash \Delta_1, x:\alpha$}
        \AXC{$\Gamma_1 \vdash \Delta_2, x:\alpha$}
        \BIC{$\Gamma_1,\Gamma_2 \vdash \Delta_1, \Delta_2, x:\alpha \wedge \beta$}
        \DP
    \end{center}
    its residue is $(\chi_1\setminus\mathrm{Var}(\beta)) \cup (\chi_2 \setminus\mathrm{Var}(\alpha))$, where $\chi_1$ and $\chi_2$ are respectively the residue of the left and the right premise, while $\mathrm{Var}(\varphi)$ is the set of the decorated atoms contained in $\varphi$. The $\vee_L$ case is defined analogously.
\end{definition}

In the special case where in the application of $\wedge_R$ one of the premises is an axiom from an atomic cut, Definition \ref{def:MASSA_termination_auxiliaries_definitions} reduces to

\begin{center}
        \AXC{$\Gamma \vdash \Delta, x:\alpha$}
        \AXC{$\mathcal{R},y:p \vdash x:p$}
        \BIC{$\Gamma, \mathcal{R},y:p \vdash \Delta, x:\alpha \wedge p$}
        \DP
    \end{center}
    and its residue is $(\chi_1\setminus\{p\}) \cup (\chi_2 \setminus\mathrm{Var}(\alpha))$. This special case will be useful in the proof of Proposition \ref{prop: MASSA_termination}.

Not being able to eliminate all the residue coincides with the inability to finish step $(ii)$ of the algorithm. The next example illustrates the connection between the two notions.

\begin{example}\label{exe:MASSA_termination_auxiliaries_definitions}
    Consider the formula  $\wbox(A \vee B) \ararr \wdia (B \wedge A)$ from Example \ref{exe:MASSA_failure_loop}, after the decoration it becomes $\wbox(A_1 \vee B_1) \ararr \wdia (B_2 \wedge A_2)$.
    The leaves obtained after the atomic cuts are
    \[xRz, xRy, y=z, y:A_1 \vdash z: A_2\quad xRz, xRy, y=z, y:B_1 \vdash z:B_2,\]
    meaning that the couples of the formula are exactly $(A_1,A_2)$ and $(B_1,B_2)$. Notice that the instances composing the couple always have opposite polarities and come from different PIAs. The residue of the sequents ($1$), ($2$), ($3$), and ($4$) are $\{B_1, B_2\}$, $\{A_2, B_2\}$, $\{A_2, B_2\}$, and $\{B_1,B_2\}$, respectively. Now it is clear that the second phase does not terminate, because the residue does not decrease, but it is looping.
\end{example}

We finally prove that \Algname does not loop when its input is an analytic inductive inequality, preventing situations like the one in Example \ref{exe:MASSA_failure_loop} from occurring.

\begin{proposition} \label{prop: MASSA_termination}
    The algorithm MASSA terminates on definite analytic inductive inequalities.
\end{proposition}
\begin{proof}
    To start building the maximal PIA subformulas from the sequents obtained after performing the atomic cuts, we choose an arbitrary couple (cf.~Definition \ref{def:MASSA_termination_auxiliaries_definitions}). We show that whatever our starting couple is, we are always able to zero its residual. 
    The starting residue contains at most one critical instance (the two instances of the couple have opposite polarity), meaning that we can define a map from residues to the lexicographically ordered poset $2^n \times \omega^n$ ($n$ being the number of atomic propositions in the inequality) as follows:
    \[\{*p_i, p_1,\ldots,p_n\}\mapsto 0^{n-i}\times 1\times 0^{i-1}\times (m_1\ldots,m_n),\]
    where the asterisk denotes the only critical instance and $m_j$ is the multiplicity of $p_j$ in the residue. Notice that this definition differs slightly from the one in Proposition \ref{prop: SCAN_terminates}, in the sense that when the critical instance of $p_i$ is smaller with respect to $\Omega$, now $p_i$ is larger with respect to the order induced by the map. We denote the value of the map as the \emph{size} of the residue. The sizes are well-ordered, meaning that to prove the termination of MASSA is sufficient to argue that every step shrinks the residue, whereas with `step' we mean removing decorated atoms from the residue adding new couples via binary rules, updating the residue appropriately as explained in Definition \ref{def:MASSA_termination_auxiliaries_definitions}. We distinguish two cases.

    If the chosen decorated atom to remove is not critical, its partner (i.e.~the second instance of the couple) will be critical, meaning that in the new residue only non-critical decorated atoms are added, that are strictly smaller than the critical instance that got removed.

    If the chosen element to remove from the residue is critical, its partner will be non-critical. If the new residue has no critical instances, then the size decreases, whereas if there is one (and necessarily not more than one) critical instance, it will be bigger with respect to the order $\Omega$, meaning that the size of the resulting residue will still be smaller.
\end{proof}

\subsection{Soundness} \label{subsec:MASSA_soundness}

Having shown that MASS terminates on definite analytic inductive formulas, in this subsection we prove that its output is the correct one. To do this, we will show that the first-order correspondent of MASSA is equivalent to the one produced by SCAN.

\begin{proposition} \label{prop: MASSA_soundness_via_SCAN}
    The algorithm MASSA gives the same output as SCAN on definite analytic inductive axioms.
\end{proposition}
\begin{proof}
We recall from Remark \ref{rem:SCAN_MASSA_comparison} that on an arbitrary definite analytic inductive formula $\alpha$, SCAN starts with clauses
 \[\{x_1Ry_1\},\ldots,\{x_nRy_n\},\]
    \[\{\neg z_{1,1}Rw_{1,1},\ldots,\neg z_{1,k_1}Rw_{1,k_1},*P_{1,1}(t_{1,1}),\ldots,*P_{1,k'_1}(t_{1,k'_1})\},\]
    \[\vdots\]
    \[\{\neg z_{m,1}Rw_{m,1},\ldots,\neg z_{m,k_m}Rw_{m,k_m},*P_{m,1}(t_{m,1}),\ldots,*P_{m,k'_m}(t_{m,k'_m})\}.\]
Where all the variables are quantified as explained previously in the remark above. After applying resolution and deleting all the clauses containing second-order predicates, we obtain something of the form
 \[\{x_1Ry_1\},\ldots,\{x_nRy_n\},\]
    \[\{\neg j_{1,1}Rh_{1,1},\ldots,\neg j_{1,l_1}Rh_{1,l_1},s_{1,1}\neq v_{1,1},\ldots,s_{1,l'_1}\neq v_{1,l'_1}\},\]
    \[\vdots\]
    \[\{\neg j_{m',1}Rh_{m',1},\ldots,\neg j_{m',l_{m'}}Rh_{m',l_{m'}},s_{m',1}\neq v_{m',1},\ldots,s_{m',l'_{m'}}\neq v_{m',l'_{m'}}\}.\]
Where all the variables different from $x_1,y_1,\ldots,x_n,y_n$ are universally quantified. If we denote the last $m'$ clauses above with $\mathcal{B}_1,\ldots,\mathcal{B}_{m'}$ respectively, once we translate the output into a first-order sentence and negate it, we get
\begin{equation} \label{eq:SCAN_output_geometric}
\forall \overline{x},\overline{y}(x_1Ry_1 \wedge \cdots \wedge x_nRy_n \rightarrow \exists \overline{z_1}\neg\mathcal{B}_1 \vee \cdots\vee\overline{z_{m'}}\neg\mathcal{B}_{m'}),
\end{equation}
where $\overline{z_i}$ are all the variables in $\mathcal{B}_i$ different from $\overline{x}$ and $\overline{y}$, while $\neg \mathcal{B}_i$ is the conjunction of all the negated literals in $\mathcal{B}_i$. Notice that \ref{eq:SCAN_output_geometric} is a geometric formula, and it is clear that if we run \Algname on $\alpha$, we get at least the same tail of universal quantifiers and the same antecedent, from the decomposition of $\alpha$'s Skeleton connectives during phase $(iii)$. It remains to show that every $\neg \mathcal{B}_i$ corresponds to the relational atoms in one of the premises of the merging point created during phase $(ii)$. The last statement follows from the observation that all the inequalities $s\neq v$ come from the first round of resolution (where all the premises of the resolution step are input clauses) and they correspond to the atomic cuts; furthermore, in all the remaining rounds of resolution we just merge first-order literals without creating new relational information, and they correspond to building the maximal PIA subformulas of $\alpha$ by adding couples (cf.Definition \ref{def:MASSA_termination_auxiliaries_definitions}).
   
\end{proof}

We are now ready to state the main result of this section, leveraging the soundness of the algorithm SCAN and the propositions proven above.

\begin{theorem} \label{thm: MASSA_successful}
    The algorithm MASSA is successful on definite analytic inductive axioms.
\end{theorem}
\begin{proof}
    We proved in Proposition \ref{prop: MASSA_termination} that MASSA terminates on definite analytic inductive axioms, while in Proposition \ref{prop: MASSA_soundness_via_SCAN} we proved that on definite analytic inductive axioms, its output coincides with the output of the algorithm SCAN. Since SCAN is complete with respect to analytic inductive formulas (cf.~Proposition \ref{prop: SCAN_terminates}), this implies that MASSA yields the correct first-order correspondent and, accordingly, the correct equivalent analytic rule.
\end{proof}

\section{Extending MASSA} \label{sec:extending_MASSA}

In this section we show that our approach can be readily generalized to much broader contexts. Without reporting detailed proofs, we will illustrate through examples how \Algname can be modified to work even with inductive and not necessarily analytic inductive formulas. We will show how to generalize the algorithm in the context of first-order modal logic and in the context of distributive lattice expansion logics (DLE).

To make extensions to more general cases work, MASSA must sometimes be modified appropriately. In this section, in lieu of giving a different definition of the algorithm in each case, we will describe from time to time the modifications needed as the examples unfold.

\subsection{Extending MASSA to inductive axioms} \label{Extending_MASSA_inductive}

As previously mentioned in Remark \ref{rem:different_from_Goranko}, the proof that SCAN is successful on the class of Sahlqvist axioms \cite{Goranko:SCAN:Complete} gives us a strategy for coping with the unksolemization step when using SCAN on arbitrary inductive axioms. Together with our argument for the termination of the resolution phase, we conjuncture we could find a proof that SCAN is successful on the whole class of inductive axioms. In this section, we give some examples of how MASSA can be used to extract the first-order correspondents together with their equivalent analytic systems of rules (in the sense of \cite{Negri14}) of the whole class of inductive formulas in the classical setting.

As a first example, consider the axiom $p \ararr \Box \Box \wdia \Box \wdia p$. The axiom is definite and Sahlqvist (hence inductive), but not analytic inductive. The first step of \Algname produces the leaves

\[x: \bp \vd x: \rp \quad xRy_1, y_1Ry_2, y_2Ry_3, y_3Ry_4, y_4Ry_5, x:\rp \vd y_5:\bp\]

Starting from the leaf $xRy_1, y_1Ry_2, y_2Ry_3, y_3Ry_4, y_4Ry_5, x=y_5, x:p \vd y_5:p$, for every layer of Skeleton/PIA connectives we proceed top down as follows, starting from the first innermost layer. 

\begin{itemize}
    \item We build the $n$-th layer of PIA connectives, after that we mark the relational atoms involved in the construction and we draw the $n$-th merging point. In the case $n=1$, the equality atoms coming from the atomic cuts start already marked.
    \item We copy the labelled formulas and the un-marked relational atoms below the merging point and we build the $n$-th layer of Skeleton connectives.
    \item If we are not finished, we go back to the first point and build the $n+1$-th layer.  
\end{itemize}

\begin{center}
{\small
\AXC{$xRy_1, y_1Ry_2, y_2Ry_3, y_3Ry_4, y_4Ry_5, \textcolor{red}{x=y_5}, x:p \vd y_5:p$}
\UIC{$xRy_1, y_1Ry_2, y_2Ry_3, y_3Ry_4, \textcolor{red}{y_4Ry_5}, \textcolor{red}{x=y_5}, x:p \vd y_4:\wdia p$}
\dashedLine
\UIC{$xRy_1, y_1Ry_2, y_2Ry_3, y_3Ry_4, x:p \vd y_4:\wdia p$}
\UIC{$xRy_1, y_1Ry_2, y_2Ry_3, x:p \vd y_3:\Box\wdia p$}
\doubleLine
\UIC{$xRy_1, y_1Ry_2, y_2Ry_3, x:p \vd y_3:\Box\wdia p$}
\UIC{$xRy_1, y_1Ry_2, \textcolor{red}{y_2Ry_3}, x:p \vd y_2:\wdia\Box\wdia p$}
\dashedLine
\UIC{$xRy_1, y_1Ry_2, x: p \vd y_2: \wdia \Box \wdia p$}
\UIC{$xRy_1, x: p \vd y_1: \Box \wdia \Box \wdia p$}
\UIC{$x: p \vd x: \Box \Box \wdia \Box \wdia p$}
\UIC{$\vd x: p \ararr \Box \Box \wdia \Box \wdia p$}
\DP
}
\end{center}

Notice how this time we got two merging points, since the formula $ p \ararr \Box \Box \wdia \Box \wdia p$ has two layers of Skeleton/PIA alternations. In the derivation of the axiom above, a double line separates the two layers. The relational atoms involved in the construction of the PIA connectives are marked in red. The first order correspondent extracted from the two merging point is the following depth-$2$ general geometric axiom (cf.~\cite{Negri14}): 

\[\forall x \forall y_1 \forall y_2 (xRy_1 \wedge y_1Ry_2 \ararr \exists y_3(y_2Ry_3 \wedge \forall y_4(y_3Ry_4 \ararr \exists y_5(y_4Ry_5 \wedge x=y_5)))),\]
with the corresponding system of rules
\[ \begin{cases}
\AXC{$xRy_1, y_1Ry_2, y_2Ry_3, \Gamma \vd \Delta$}
\UIC{$xRy_1, y_1Ry_2, \Gamma \vd \Delta$}
\DP \\ \\
\AXC{$xRy_1, y_1Ry_2, y_2Ry_3, y_3Ry_4, y_4Ry_5, x=y_5, \Gamma \vd \Delta$}
\UIC{$xRy_1, y_1Ry_2, y_2Ry_3, y_3Ry_4, \Gamma \vd \Delta$}
\DP
\end{cases} \] 

As a second example, consider now the definite inductive (but not sahlqvist) axiom $\wdia p \wedge q \ararr \wdia \Box (p \vee \wdia q)$. The leaves obtained after the atomic cut are

\[xRz,zRw,wRt,t=x,x:q \vd t:q\quad xRz,zRw,xRy,w=y,y:p \vd w:p\]

We follow the same procedure outlined above.

\begin{center}
{\small

\AXC{$xRz,zRw,wRt,\textcolor{red}{t=x},x:q \vd t:q$}
\UIC{$xRz,zRw,\textcolor{red}{wRt},\textcolor{red}{t=x},x:q \vd w:\wdia q$}

\AXC{$xRz,zRw,xRy,\textcolor{red}{w=y},y:p \vd w:p$}

\dashedLine
\BIC{$xRy, xRz, zRw,  y: p, x: q \vd w: p, w: \wdia q$}
\UIC{$xRy, xRz, zRw,  y: p, x: q \vd w: p \vee \wdia q$}
\UIC{$xRy, xRz,  y: p, x: q \vd z: \Box (p \vee \wdia q)$}
\doubleLine
\UIC{$xRy, xRz,  y: p, x: q \vd z: \Box (p \vee \wdia q)$}
\UIC{$xRy, \textcolor{red}{xRz},  y: p, x: q \vd x: \wdia \Box (p \vee \wdia q)$}

\dashedLine
\UIC{$xRy,  y: p, x: q \vd x: \wdia \Box (p \vee \wdia q)$}
\UIC{$ x: \wdia p, x: q \vd x: \wdia \Box (p \vee \wdia q)$}
\UIC{$ x: \wdia p \wedge q \vd x: \wdia \Box (p \vee \wdia q)$}
\UIC{$\vd x: \wdia p \wedge q \ararr \wdia \Box (p \vee \wdia q)$}
\DP
}
\end{center}

Again, we extract the first-order correspondent from the merging point

\[\forall x \forall y(xRy \ararr \exists z(xRz \wedge \forall w(zRw \ararr \exists t(wRt \wedge t=x) \vee w=y))),\]
and we write the corresponding system of rules.
\[ \begin{cases}
\AXC{$xRy, xRz,  \Gamma \vd \Delta$}
\UIC{$xRy,  \Gamma \vd \Delta$}
\DP \\ \\
\AXC{$xRz,zRw,wRt,t=x,\Gamma \vd\Delta$}
\AXC{$xRz,zRw,xRy,w=y,\Gamma \vd \Delta$}
\BIC{$xRy, xRz, zRw, \Gamma \vd \Delta$}
\DP
\end{cases} \] 

\subsection{Extending MASSA to the quantified setting} \label{subsec:estending_MASSA_quantified}

The results of the previous sections can be generalized to the quantificational setting with minimal effort. We provide some interesting examples in the G3-style labelled calculus for classical first-order modal logic described in \cite{negri2011proof}, where the new rules for the quantifiers are the following.

\begin{center}
    \begin{tabular}{rl}
\AXC{$a \in D(w), \Gamma \vd \Delta, w: A(a/x)$}
\LL{$\forall_L$}
\UIC{$\Gamma\vd \Delta, w: \forall x A$}
\DP
 & 
\AXC{$w: A(a/x), w: \forall x A, a \in D(w), \Gamma \vd \Delta$}
\RL{$\forall_R$}
\UIC{$w: \forall x A, a \in D(w),\Gamma \vd \Delta$}
\DP
 \\ \\
 \AXC{$a \in D(w), w: A(a/x),\Gamma \vd \Delta$}
\LL{$\exists_L$}
\UIC{$w: \exists x A,\Gamma\vd \Delta$}
\DP
 & 
\AXC{$a \in D(w),\Gamma \vd \Delta, w:\exists x A, w: A(a/x)$}
\RL{$\exists_R$}
\UIC{$a \in D(w),\Gamma \vd \Delta,w: \exists x A$}
\DP
\end{tabular}
\end{center}

From the point of view of our syntactical analysis, universal (resp.~existential) quantifiers are treated like box (resp.~diamond) connectives, meaning that $-\forall$ (resp.~$+\exists$) is a Skeleton connective and $+\forall$ (resp.~$-\exists$) is a PIA connective. Consider the analytic inductive formula $\wdia \forall x A \ararr \forall x \wdia A$. For the first phase, we carry out the derivation of the identity sequent as follows:

\begin{center}
{\footnotesize
\AXC{$(\pi_1)$}
\AXC{$(\pi_2)$}
\LL{$\ararr_L$}
\BIC{$ \ w: \wdia \forall x \rA(x) \ararr \forall x \wdia \rA(x), w: \wdia \forall x \bA(x) \vd w: \forall x \wdia \bA(x)$}
\RL{$\ararr_R$}
\UIC{$ \ w: \wdia \forall x \rA(x) \ararr \forall x \wdia \rA(x) \vd w: \wdia \forall x \bA(x) \ararr \forall x \wdia \bA(x)$}
\DP
}
\end{center}
Where $(\pi_1)$ is the derivation
\begin{center}
\AXC{$ \ $}
\RL{Id$_{u:A(a/x)}$}
\UIC{$wRu,a\in D(u), u: \bA(a/x)\vd u: \rA(a/x)$}
\LL{$\forall_L$}
\UIC{$wRu,a\in D(u), u: \forall x \bA(x)\vd u: \rA(a/x)$}
\RL{$\forall_R$}
\UIC{$wRu, u: \forall x \bA(x)\vd u: \forall x \rA(x)$}
\RL{$\wdia_R$}
\UIC{$wRu, u: \forall x \bA(x)\vd w:\wdia \forall x \rA(x)$}
\LL{$\wdia_L$}
\UIC{$w:\wdia \forall x \bA(x)\vd w:\wdia \forall x \rA(x)$}
\DP
\end{center}
and $(\pi_2)$ is the derivation
\begin{center}
    \AXC{$ \ $}
\RL{Id$_{v:A(b/x)}$}
\UIC{$b \in D(w), wRv, v: \rA(b/x) \vd v: \bA(b/x)$}
\RL{$\wdia_R$}
\UIC{$b \in D(w), wRv, v: \rA(b/x) \vd w:\wdia \bA(b/x)$}
\LL{$\wdia_L$}
\UIC{$b \in D(w),w: \wdia \rA(b/x) \vd w:\wdia \bA(b/x)$}
\LL{$\forall_L$}
\UIC{$b \in D(w),w: \forall x \wdia \rA(x) \vd w:\wdia \bA(b/x)$}
\RL{$\forall_R$}
\UIC{$w: \forall x \wdia \rA(x) \vd w: \forall x \wdia \bA(x)$}
\DP
\end{center}

The remaining phases are carried out as follows:

\begin{center}
{\small
\AXC{$ wRu, wRv, a \in D(u),b\in D(w), u=v, a=b, u:A(a/x) \vd v: A(b/x) $}
\UIC{$ wRu, wRv, a \in D(u),b\in D(w), u=v, a=b, u:A(a/x) \vd w: \wdia A(b/x) $}
\UIC{$ wRu, wRv, a \in D(u),b\in D(w), u=v, a=b, u:\forall x A(x) \vd w: \wdia A(b/x) $}

\dashedLine
\UIC{$wRu, b\in D(w), u: \forall x A(x)\vd w: \wdia A(b/x)$}
\UIC{$wRu, u: \forall x A(x)\vd w:\forall x \wdia A(x)$}
\UIC{$ w:\wdia \forall x A(x)\vd w:\forall x \wdia A(x)$}
\UIC{$\vd w:\wdia \forall x A(x)\ararr\forall x \wdia A(x)$}
\DP
}
\end{center}

As expected, the first-order correspondent of $\wdia \forall x A \rightarrow \forall x \wdia A$ is

\[\forall w \forall u \forall b (wRu \wedge b\in D(w) \ararr \exists v \exists a (u=v \wedge a=b \wedge wRv \wedge a\in D(u))),\]
once simplified it becomes
\[\forall w \forall u \forall b (wRu \wedge b\in D(w) \ararr \wedge b\in D(u)).\]

Consider now a `Barcan-style' seriality $\exists x \Box A \ararr \wdia \exists x A$ and omit the trivial first phase. The two endsequents produced are:
\[a \in D(w), wRv, v:\bA(a/x) \vd v:\rA(a/x),\]
\[wRu, b\in D(u), u:\rA(b/x)\vd u:\bA(b/x)\]

We cut the two leaves and produce the merging point as follows:

\begin{center}
{\small
\AXC{$ wRu, wRv, a \in D(w), b \in D(u), u=v, a=b, v:A(a/x)\vd u: A(b/x) $}
\UIC{$ wRu, wRv, a \in D(w), b \in D(u), u=v, a=b, w:\Box A(a/x)\vd u: A(b/x) $}
\UIC{$ wRu, wRv, a \in D(w), b \in D(u), u=v, a=b, w:\Box A(a/x)\vd u: \exists x A(x) $}
\UIC{$ wRu, wRv, a \in D(w), b \in D(u), u=v, a=b, w:\Box A(a/x)\vd w: \wdia \exists x A(x) $}

\dashedLine
\UIC{$ a\in D(w), w: \Box A(a/x) \vd w: \wdia \exists x A(x)$}
\UIC{$ w:\exists x \Box A(x) \vd w: \wdia \exists x A(x)$}
\UIC{$\vd w:\exists x \Box A(x) \ararr \wdia \exists x A(x)$}
\DP
}
\end{center}

We finally extract the first-order correspondent from the merging point:

\[\forall w \forall a (a \in D(w) \ararr \exists u \exists v \exists b(wRu, wRv, b\in D(u), u=v, a=b)),\]
or equivalently
\[\forall w \forall a (a \in D(w) \ararr \exists u (wRu \wedge a\in D(u))).\]

\section{Conclusions and future work}
\label{sec: conclusions}

\paragraph{Main contributions.} In this article we presented MASSA, an algorithm that exploits the properties of G3-style calculi to generate analytical rules equivalent to certain axioms belonging to an appropriate class of first-order definable formulas. We have shown that the algorithm terminates and is correct when its input belongs to the class of definite analytic inductive formulas. MASSA's proof of correctness is based on the correctness of the SCAN algorithm, and to achieve the goal we proved an interesting result in itself: SCAN is complete with respect to the class of analytic inductive formulas. We illustrated how to extend our approach to the broader class of inductive axioms, moving from rules to systems of rules, then further explained how to generalize the algorithm to the quantified setting.

\paragraph{Related work.} 
The results in the present paper pertain to a larger line of research in structural proof theory focusing on the uniform generation of analytic rules for classes of axiomatic extensions in different (nonclassical) logics, which includes  e.g., \cite{Sim94,Vig00,NegVonPla98,Neg03,negri2005proof} in the context of sequent and labelled calculi, \cite{ciabattoni2008axioms,lahav2013frame,lellmann2014axioms} in the context of sequent and hypersequent calculi, and \cite{Kracht,CiRa14,GMPTZ} in the context of (proper) display calculi. We refer to \cite{ChnGrePalTzi21} for an overview of this literature. 

\paragraph{Future prospects.} We plan to exploit the algorithm \Algname as a tool to obtain new results in proof analysis and correspondence theory. We hope that this new proof-theoretic approach to correspondence theory will shed new light on the theoretical foundations of the generation of analytic rules for axiomatic extensions in nonclassical logic.

\bibliographystyle{abbrv}
\bibliography{reference}

\begin{thebibliography}{10}

\bibitem{ChnGrePalTzi21}
J.~Chen, G.~Greco, A.~Palmigiano, and A.~Tzimoulis.
\newblock Syntactic completeness of proper display calculi.
\newblock {\em Submitted}, arXiv:2102.11641, 2021.

\bibitem{ciabattoni2008axioms}
A.~Ciabattoni, N.~Galatos, and K.~Terui.
\newblock From axioms to analytic rules in nonclassical logics.
\newblock In {\em Logic in Computer Science}, volume~8, pages 229--240, 2008.

\bibitem{CiRa14}
A.~Ciabattoni and R.~Ramanayake.
\newblock Power and limits of structural display rules.
\newblock {\em ACM Transactions on Computational Logic}, 17(3):17:1--17:39, February 2016.

\bibitem{CoGhPa13}
W.~Conradie, S.~Ghilardi, and A.~Palmigiano.
\newblock {U}nified correspondence.
\newblock In A.~Baltag and S.~Smets, editors, {\em Johan van Benthem on Logic and Information Dynamics}, volume~5 of {\em Outstanding Contributions to Logic}, pages 933--975. Springer International Publishing, 2014.

\bibitem{Conradie:et:al:SQEMAI}
W.~Conradie, V.~Goranko, and D.~Vakarelov.
\newblock {A}lgorithmic correspondence and completeness in modal logic. {I}. {T}he core algorithm {SQEMA}.
\newblock {\em Logical Methods in Computer Science}, 2:1--26, 2006.

\bibitem{CoPa12}
W.~Conradie and A.~Palmigiano.
\newblock Algorithmic correspondence and canonicity for distributive modal logic.
\newblock {\em Annals of Pure and Applied Logic}, 163(3):338 -- 376, 2012.

\bibitem{conradie2019algorithmic}
W.~Conradie and A.~Palmigiano.
\newblock Algorithmic correspondence and canonicity for non-distributive logics.
\newblock {\em Annals of Pure and Applied Logic}, 170:923--974, 2019.

\bibitem{de2021slanted}
L.~De~Rudder and A.~Palmigiano.
\newblock Slanted canonicity of analytic inductive inequalities.
\newblock {\em ACM Transactions on Computational Logic (TOCL)}, 22(3):1--41, 2021.

\bibitem{GaOh92c}
D.~M. Gabbay and H.~J. Ohlbach.
\newblock Quantifier elimination in second--order predicate logic.
\newblock In B.~Nebel, C.~Rich, and W.~Swartout, editors, {\em Principles of Knowledge Representation and Reasoning (KR92)}, pages 425--435. Morgan Kaufmann, 1992.
\newblock Also published as a Technical Report MPI-I-92-231, Max-Planck-Institut f{\"u}r Informatik, Saarbr{\"u}cken, and in the {\it South African Computer Journal}, 1992.

\bibitem{Goranko:SCAN:Complete}
V.~Goranko, U.~Hustadt, R.~A. Schmidt, and D.~Vakarelov.
\newblock Scan is complete for all sahlqvist formulae.
\newblock In R.~Berghammer, B.~M{\"o}ller, and G.~Struth, editors, {\em Relational and Kleene-Algebraic Methods in Computer Science}, pages 149--162, Berlin, Heidelberg, 2004. Springer Berlin Heidelberg.

\bibitem{Goranko:Vakarelov:2006}
V.~Goranko and D.~Vakarelov.
\newblock Elementary canonical formulae: Extending {S}ahlqvist theorem.
\newblock {\em Annals of Pure and Applied Logic}, 141(1-2):180--217, 2006.

\bibitem{GMPTZ}
G.~Greco, M.~Ma, A.~Palmigiano, A.~Tzimoulis, and Z.~Zhao.
\newblock Unified correspondence as a proof-theoretic tool.
\newblock {\em Journal of Logic and Computation}, 28(7):1367--1442, 2016.

\bibitem{Kracht}
M.~Kracht.
\newblock Power and weakness of the modal display calculus.
\newblock In {\em Proof theory of modal logic}, volume~2 of {\em Applied Logic Series}, pages 93--121. Kluwer, 1996.

\bibitem{lahav2013frame}
O.~Lahav.
\newblock From frame properties to hypersequent rules in modal logics.
\newblock In {\em Proceedings of the 2013 28th Annual ACM/IEEE Symposium on Logic in Computer Science}, pages 408--417. IEEE Computer Society, 2013.

\bibitem{lellmann2014axioms}
B.~Lellmann.
\newblock Axioms vs hypersequent rules with context restrictions: theory and applications.
\newblock In {\em Automated Reasoning}, volume 8562 of {\em Lecture Notes in Computer Science}, pages 307--321. Springer, 2014.

\bibitem{Neg03}
S.~Negri.
\newblock Contraction-free sequent calculi for geometric theories, with an application to {B}arr's theorem.
\newblock {\em Archive for Mathematical Logic}, 42:389--401, 2003.

\bibitem{negri2005proof}
S.~Negri.
\newblock Proof analysis in modal logic.
\newblock {\em Journal of Philosophical Logic}, 34(5-6):507--544, 2005.

\bibitem{Negri14}
S.~Negri.
\newblock Proof analysis beyond geometric theories: from rule systems to systems of rules.
\newblock {\em Journal of Logic and Computation}, 26:513--537, 2014.

\bibitem{NegVonPla98}
S.~Negri and J.~Von~Plato.
\newblock Cut elimination in the presence of axioms.
\newblock {\em The Bullettin of Symbolic Logic}, 4(4):418--435, 1998.

\bibitem{Negri_van_Plato}
S.~Negri and J.~von Plato.
\newblock {\em Structural Proof Theory}.
\newblock Cambridge Universty Press, Cambridge, 2001.

\bibitem{negri2011proof}
S.~Negri and J.~Von~Plato.
\newblock {\em Proof analysis: a contribution to Hilbert's last problem}.
\newblock Cambridge University Press, 2011.

\bibitem{Sim94}
A.~Simpson.
\newblock {\em {The Proof Theory and Semantics of Intuitionistic Modal Logic}}.
\newblock PhD dissertation, University of Edinburgh, 1994.

\bibitem{vanBenthem78}
J.~van Benthem.
\newblock {\em Modal correspondence theory}.
\newblock PhD thesis, Department of Mathematics, University of Amsterdam, Amsterdam, The Netherlands, 1978.

\bibitem{Vig00}
L.~Vigan{\'o}.
\newblock {\em Labelled non-classical logics}.
\newblock Springer US, 2000.

\end{thebibliography}

\end{document}